\DeclareSymbolFont{AMSb}{U}{msb}{m}{n}
\DeclareMathSymbol{\Z}{\mathbin}{AMSb}{"5A}
\DeclareMathSymbol{\R}{\mathbin}{AMSb}{"52}
\DeclareMathSymbol{\N}{\mathbin}{AMSb}{"4E}
\DeclareMathSymbol{\Q}{\mathbin}{AMSb}{"51}
\def\Ind{\setbox0=\hbox{$x$}\kern\wd0\hbox to 0pt{\hss$\mid$\hss}
\lower.9\ht0\hbox to 0pt{\hss$\smile$\hss}\kern\wd0}
\def\Notind{\setbox0=\hbox{$x$}\kern\wd0\hbox to 0pt{\mathchardef
\nn=12854\hss$\nn$\kern1.4\wd0\hss}\hbox to
0pt{\hss$\mid$\hss}\lower.9\ht0 \hbox to
0pt{\hss$\smile$\hss}\kern\wd0}
\newtheorem{thm}{Theorem}[section]
\newtheorem{lem}[thm]{Lemma}
\theoremstyle{definition}
\newtheorem{definition}[thm]{Definition}
\theoremstyle{remark}
\theoremstyle{remark}
\newtheorem{example}[thm]{Example}
\theoremstyle{remark}
\newtheorem{claim}[thm]{Claim}
\theoremstyle{remark}
\newtheorem{conjecture}[thm]{Conjecture}
\theoremstyle{remark}
\theoremstyle{remark}
\begin{document}
\bibliographystyle{amsplain}

\title[Parametric expansions of Presburger arithmetic]{Bounding quantification in parametric expansions of Presburger arithmetic}
\author{John Goodrick}

\maketitle

\begin{abstract}
Generalizing Cooper's method of quantifier elimination for Presburger arithmetic, we give a new proof that all parametric Presburger families $\{S_t : t \in \N\}$ (as defined by Woods in \cite{woods1}) are definable by formulas with polynomially bounded quantifiers in an expanded language with predicates for divisibility by $f(t)$ for every polynomial $f \in \Z[t]$. In fact, this quantifier bounding method works more generally in expansions of Presburger arithmetic by multiplication by scalars $\{\alpha(t):  \alpha \in R, t \in X\}$ where $R$ is any ring of functions from $X$ into $\Z$.

\end{abstract}

\section{Introduction}

In this note we study expansions of Presburger arithmetic which are motivated by the combinatorial study of the following families, defined by Woods in \cite{woods1}:

\begin{definition}
\label{param_pres}
Let $t$ be a variable ranging over $\N$ and fix $d \in \N$. A family $\{S_t : t \in \N\}$ of sets $S_t \subseteq \Z^d$ is a \emph{parametric Presburger family} if it is defined by an inequality of the form $$\sum_{i=1}^d a_i(t) x_i \leq b(t)$$ where $a_1, \ldots, a_d, b \in \Z[t]$, or if it can be constructed from such inequalities by a finite sequence of Boolean operations and coordinate projections.
\end{definition}

When defining a parametric Presburger family, quantifying over the ``parameter'' variable $t$ is not allowed; we may only quantify over the $\Z$-valued variables $x_1, \ldots, x_d$. 

Woods conjectured that the counting function $g(t) = |S_t|$ of a parametric Presburger family is eventually quasi-polynomial, that is, if there exist $m$ and polynomials $g_1, \ldots, g_m \in \Q[t]$ such that $g(t) = g_i(t)$ if $t \equiv_m i$ and $t \gg 0$. This conjecture was a significant generalization of Ehrhart's theorem (see \cite{ehrhart}) asserting that if $S_t$ is the $t$-th dilate of a fixed polytope with vertices in $\Z^d$ then $g(t)$ is a polynomial. Woods further conjectured that $\max(S_t)$ (defined whenever $S_t$ is finite) is an eventual quasi-polynomial function of $t$, which would give a positive answer to a recent question of Roune and Woods on the \emph{parametric Frobenius problem} from combinatorics (see \cite{roune_woods}).

These combinatorial conjectures of Woods were eventually proved by Bogart, Woods, and the present author \cite{BGW} using the results of the current paper in two key steps. Given that Woods had previously verified his conjectures in the case when the family $S_t$ is definable by a \emph{quantifier-free} parametric Presburger formula, the idea that one should try to eliminate quantifiers in some reasonable language was clear from a logical perspective. As it turns out, we still do not know of a good language in which outright quantifier elimination can be achieved (see Example~\ref{intervals} below and the discussion which follows), but a logical reduction to parametric Presburger formulas with bounded quantifiers did turn out to be useful in establishing the conjectures of Woods, hence the inspiration for this paper.  For a detailed outline of the proof of these conjectures, the interested reader is referred to the introduction of \cite{BGW}.

Now we will make precise the languages we will consider in this article, beginning with the classical language $\mathcal{L}^+_{Pres}$ for quantifier elimination in Presburger arithmetic:

\begin{definition}

\begin{enumerate}

\item $\mathcal{L}_{Pres} = \{0, 1, <, -, +\}$ is the usual first-order language with constant symbols for $0$ and $1$, a binary relation for the ordering, a unary operation symbol $-$ for negation, and a binary operation symbol $+$ for addition.

\item $\mathcal{L}^+_{Pres} = \mathcal{L}_{Pres} \cup \{D_n : n \in \N\}$ where $D_n$ is a unary predicate representing divisibility by $n$. (By convention, $D_0(x)$ is always false.)

\item $\mathcal{L}_{\Z[t]} = \mathcal{L}_{Pres} \cup \{f_\alpha : \alpha \in \Z[t]\}$, where $f_\alpha$ is a unary function symbol representing scalar multiplication by $\alpha(t)$ and $t \in \N$.

\item $\mathcal{L}^+_{\Z[t]} = \mathcal{L}_{\Z[t]} \cup \{D_\alpha : \alpha \in \Z[t] \}$ where each $D_\alpha$ is a unary predicate symbol representing divisibility by $\alpha(t)$ for some parameter $t \in \N$.

\end{enumerate}

\end{definition}

Unfortunately, quantifier elimination in the full first-order $\mathcal{L}_{\Z[t]}^+$-theory of the integers is not possible; see Example~\ref{intervals} below.

What we will show is that any first-order $\mathcal{L}_{\Z[t]}^+$-formula is equivalent to one with polynomially bounded quantifiers. We will prove this in the following more general context.

\begin{definition}
Let $X$ be any nonempty set and let $R$ be any ring of functions $f: X \rightarrow \Z$ which includes all constant functions.

\begin{enumerate}

\item $\mathcal{L}_R = \mathcal{L}_{Pres} \cup \{f_\alpha : \alpha \in R\}$, where $f_\alpha$ is a unary function symbol representing scalar multiplication by $\alpha(t)$ and $t \in X$.

\item $\mathcal{L}^+_R = \mathcal{L}_R \cup \{D_\alpha : \alpha \in R \}$ where each $D_\alpha$ is a unary predicate symbol representing divisibility by $\alpha(t)$ for some parameter $t \in X$.

\end{enumerate}

\end{definition}

For ease of notation, instead of ``$f_\alpha(s)$'' we will usually write ``$\alpha \cdot s$'' or ``$\alpha(t) \cdot s$'' (if we want to emphasize the role of the parameter $t$). We will also use ``$\alpha$'' to denote the term $f_\alpha(1)$ when $\alpha \in R$, which should cause no confusion.

Now we can state our main result:

\begin{thm}
\label{qe_main}

Every first-order $\mathcal{L}^+_R$-formula is logically equivalent to an $\mathcal{L}^+_R$-formula with $R$-bounded quantifiers.
\end{thm}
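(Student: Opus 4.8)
The plan is to adapt Cooper's quantifier-elimination procedure, replacing each place where that procedure produces a finite disjunction of fixed length by a single bounded quantifier whose bound is an $\mathcal{L}_R$-term. As usual it suffices to treat a single existential quantifier: writing the given formula in prenex form and processing quantifiers from the innermost outward, I reduce to showing that for a quantifier-free $\mathcal{L}^+_R$-formula $\phi(x,\bar z)$ the formula $\exists x\,\phi$ is equivalent to one with $R$-bounded quantifiers. (Universal quantifiers are handled by negation, and since the bounds produced below are closed $\mathcal{L}_R$-terms, the bounded quantifiers created at one stage commute with the next elimination and can be pulled to the front before re-prenexing.) First I would put $\phi$ into a normal form in the variable $x$: every atomic subformula mentioning $x$ is rewritten, by transposing terms, as one of $\alpha\cdot x<\tau$, $\tau<\alpha\cdot x$, $\alpha\cdot x=\tau$, or $D_\beta(\gamma\cdot x+\tau)$, where $\alpha,\beta,\gamma\in R$ and $\tau$ is an $\mathcal{L}_R$-term not involving $x$. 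The coefficients $\alpha,\gamma$ and divisors $\beta$ occurring here are finite in number and fixed by $\phi$.

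The first real difficulty is clearing the coefficients of $x$. In the classical setting one multiplies through by the lcm of the coefficients; here $R$ need be neither a UFD nor closed under lcm, so instead I take $\delta$ to be the product of all coefficients of $x$ appearing in $\phi$ and multiply each atomic formula by the appropriate cofactor so that $x$ occurs everywhere with coefficient $\delta$, then substitute a fresh variable for $\delta\cdot x$ and record the constraint that this variable is divisible by $\delta$. The subtlety, and what I expect to be the main obstacle, is that multiplying an inequality by a ring element reverses its direction according to the sign of that element, and over an arbitrary index set $X$ the sign of $\alpha(t)$ need not be constant. I resolve this by a finite case split on the signs of the finitely many relevant ring elements: each condition ``$\alpha<0$'', ``$\alpha=0$'', ``$\alpha>0$'' is expressed by the quantifier-free $\mathcal{L}_R$-atom comparing the term $f_\alpha(1)$ with $0$, so the whole argument proceeds inside each sign-cell, where inequality directions are fixed and the degenerate cases $\delta=0$ can be split off and simplified separately.

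Within a fixed sign-cell, after the substitution $x$ occurs with coefficient $1$ everywhere, and $\phi$ becomes a Boolean combination of lower bounds $\sigma_i<x$, upper bounds $x<\tau_j$, and divisibilities $D_{\beta_k}(x-\rho_k)$. Setting $m$ equal to the product $\prod_k\beta_k$ of the finitely many divisors, the conjunction of the divisibility constraints is periodic in $x$ with period $m$, since each $\beta_k$ divides $m$. Cooper's lemma then gives
$$
\exists x\,\phi \;\Longleftrightarrow\; \Big[\exists j\,(1\le j\le m \wedge \phi_{-\infty}(j))\Big]\;\vee\;\bigvee_i \Big[\exists j\,(1\le j\le m \wedge \phi(\sigma_i+j))\Big],
$$
where $\phi_{-\infty}$ is obtained from $\phi$ by deleting the lower bounds (replacing each $\sigma_i<x$ by truth) and killing the upper bounds, capturing the behavior as $x\to-\infty$. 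Here is the crux of the whole proof: in the classical case $m$ is a fixed integer and each $\bigvee_{j=1}^m$ is a finite disjunction, whereas in our setting $m\in R$ varies with the parameter, so the disjunction cannot be expanded and is instead recorded as a single quantifier $\exists j\,(1\le j\le m\wedge\cdots)$ bounded by the $\mathcal{L}_R$-term $m$, which is precisely an $R$-bounded quantifier. Since the terms $\sigma_i$ are $\mathcal{L}_R$-terms not involving $x$, the bound $m$ is a closed term, and the outer disjunctions (over $i$ and over sign-cells) are genuinely finite, the resulting formula lies in $\mathcal{L}^+_R$ and uses only $R$-bounded quantifiers, completing the elimination step and hence, by the induction, the proof.
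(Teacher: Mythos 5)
Your core one-quantifier step is essentially the paper's: Cooper's algorithm with the lcm replaced by a product of coefficients, a finite case split on signs and vanishing of the finitely many relevant elements of $R$, and the finite disjunctions of Cooper's lemma replaced by $R$-bounded quantifiers. The genuine gap is in your reduction to that step. You claim that after prenexing you may always assume the matrix is quantifier-free, because ``the bounded quantifiers created at one stage commute with the next elimination and can be pulled to the front before re-prenexing.'' This is false as soon as polarities alternate: eliminating an inner unbounded $\forall x$ (via negation) produces \emph{bounded universal} quantifiers in the matrix, and for the next unbounded existential one would need
$$\exists y\,\forall j\,\bigl[\,0 \leq j \leq m \rightarrow \theta(y,j)\,\bigr] \quad\Longleftrightarrow\quad \forall j\,\bigl[\,0 \leq j \leq m \rightarrow \exists y\,\theta(y,j)\,\bigr],$$
which is not a valid equivalence (the bound $m$ being a closed term does not help; this is ordinary quantifier alternation). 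So after one round your matrix is no longer quantifier-free and cannot be made so, and your induction does not close. The paper avoids this by proving the elimination lemma for matrices that \emph{already contain} $R$-bounded quantifiers: the normalization lemma is carried out through bounded quantifiers, and --- this is the piece your proposal is missing --- the set of lower-bound terms is no longer finite but is a parametrized family $\mathcal{B}_t$ defined recursively through the bounded quantifiers (Definition~\ref{B_set}, Lemma~\ref{B_form}), whose cardinality grows with $t$. Consequently the disjunction over boundary candidates in Cooper's lemma cannot be a finite disjunction indexed by the $\sigma_i$, as you assert; it must itself be rendered as nested $R$-bounded quantifiers, which is exactly what the formula \eqref{eq:elimination} of Lemma~\ref{exists_elimination} does. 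Your claim that ``the outer disjunctions (over $i$ and over sign-cells) are genuinely finite'' is true only in the quantifier-free case, which by the above you cannot reduce to.

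A smaller but real error: your description of $\phi_{-\infty}$ is backwards. For the $-\infty$ form of Cooper's lemma, which is the one that pairs with the lower-bound boundary terms $\sigma_i + j$, one must replace each lower bound $\sigma_i < x$ by \emph{falsehood} and each upper bound $x < \tau_j$ by \emph{truth} (as $x \to -\infty$ the former fail and the latter hold). As you state it (lower bounds become true, upper bounds are ``killed''), the displayed equivalence already fails for $\phi(x) \equiv x < 0$: the left side is true, while both disjuncts on the right are false. This is fixable by itself, but together with the structural problem above it means the proposal would need to be reworked along the lines of the paper's actual induction.
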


In the next section we will define ``$R$-bounded quantifiers'' and make precise the notion of logical equivalence that we are using. 

After writing our proof of Theorem~\ref{qe_main}, we found out that Lasaruk and Sturm had already proven a roughly equivalent result in \cite{lasaruksturmweakQE}. Our proof given here is significantly different; for instance, at no point do we convert our formula into disjunctive normal form. See below in the next section for a detailed comparison.

We also give a criterion for when an $\mathcal{L}^+_R$-formula is equivalent to a quantifier-free formula in $\mathcal{L}_R^+$. This will be useful in forthcoming work (joint with Bogart and Woods) in which we will prove Woods's conjecture that $|S_t|$ is eventually quasi-polynomial when $R = \Z[t]$.

\begin{thm}
\label{qe_criterion}
Suppose that $\varphi(\overline{x})$ is an $\mathcal{L}^+_R$-formula satisfying:

\begin{enumerate}
\item If the divisibility predicate $D_\alpha$ occurs in $\varphi(\overline{x})$, then $\alpha$ is a constant function; and
\item $\varphi$ does not contain any term of the form $f_\alpha(s)$ where $s$ contains a variable that is within the scope of a quantifier and $\alpha$ is not a constant.
\end{enumerate}

Then $\varphi(\overline{x})$ is logically equivalent to a quantifier-free $\mathcal{L}^+_R$-formula which also satisfies condition (1).

\end{thm}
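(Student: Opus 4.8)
The plan is to prove this by induction on the number of quantifiers in $\varphi$, eliminating one quantifier at a time starting from an innermost one, which necessarily has a quantifier-free matrix. Since conditions (1) and (2) are purely syntactic and are preserved under negation, it suffices to treat a single innermost existential quantifier: given a quantifier-free $\mathcal{L}^+_R$-formula $\psi(y, z_1, \ldots, z_k, \bar x)$, where $z_1, \ldots, z_k$ are the variables bound by the outer quantifiers of $\varphi$ and $\bar x$ are its genuinely free variables, I would produce a quantifier-free formula equivalent to $\exists y\, \psi$ that again satisfies (1) and (2), reducing the total number of quantifiers by one. Universal quantifiers are then handled via $\forall y\, \psi \equiv \neg \exists y\, \neg\psi$, and the base case of a quantifier-free formula is immediate.

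The key observation is what conditions (1) and (2) buy us. Every $\mathcal{L}_R$-term normalizes, using additivity of the $f_\alpha$ together with $f_\alpha \circ f_\beta = f_{\alpha\beta}$ and the ring structure of $R$, to the form $\gamma_0 \cdot 1 + \sum_i \beta_i \cdot v_i$ with $\gamma_0, \beta_i \in R$ and $v_i$ the variables occurring. Applied to $\varphi$, condition (2) forces the coefficient $\beta_i$ of each variable bound in $\varphi$, namely each $v_i \in \{y, z_1, \ldots, z_k\}$, to be a constant, i.e.\ an element of $\Z$, while the free variables $\bar x$ may carry non-constant coefficients; condition (1) forces every divisibility predicate occurring in $\psi$ to be some $D_n$ with $n \in \Z$. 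Consequently, if I collect the finitely many subterms of $\psi$ that involve only the parameters $\bar x$ and $t$ (the constant term $\gamma_0 \cdot 1$ and the terms $\beta_j \cdot x_j$ with $\beta_j$ non-constant) and treat them as opaque new constants $p_1, \ldots, p_r$, then $\psi$ becomes, uniformly in the values of the $p_i$, an honest formula of classical Presburger arithmetic in the language $\mathcal{L}^+_{Pres}$ in the integer variables $y, z_1, \ldots, z_k$.

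Now I would run the classical Cooper quantifier-elimination procedure to eliminate $y$ from this Presburger formula. Cooper's method only ever rescales atomic formulas by integers, forms integer-linear combinations of the non-$y$ parts of atoms, and introduces new divisibility predicates $D_n$ whose modulus $n$ is a least common multiple of integer coefficients of $y$; it therefore outputs a quantifier-free $\mathcal{L}^+_{Pres}$-formula in $z_1, \ldots, z_k, p_1, \ldots, p_r$ all of whose divisibilities are by constants and in which each $z_j$ still occurs with an integer coefficient. Substituting the terms $p_i$ back in, I obtain a quantifier-free $\mathcal{L}^+_R$-formula equivalent to $\exists y\, \psi$. It satisfies (1) because the only divisibilities are constant, and it satisfies (2) because the non-constant scalars reside entirely inside the $p_i$, which involve only the parameters $\bar x$, while the remaining bound variables $z_1, \ldots, z_k$ retain integer coefficients; the induction hypothesis then applies to the next quantifier.

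The main work is the bookkeeping that conditions (1) and (2) really are preserved by each elimination step, so that the reduction to classical Presburger arithmetic remains legitimate at every stage. One must check that Cooper's integer rescalings never attach a non-constant scalar to a bound variable, using that $n \cdot f_\alpha(x_j) = f_{n\alpha}(x_j)$ with $n\alpha \in R$ still a scalar applied to a parameter, and never produce a divisibility by a non-constant. This is precisely the point at which the present hypotheses differ from the general situation: in the general case a bound variable multiplied by a non-constant $\alpha(t)$, or a divisibility predicate $D_\alpha$ with non-constant $\alpha$, would force Cooper's procedure to take least common multiples of, or range over residues modulo, quantities depending on $t$, which cannot be expressed quantifier-free and instead yields only the $R$-bounded quantifiers of Theorem~\ref{qe_main} (cf.\ Example~\ref{intervals}). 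Conditions (1) and (2) remove exactly these obstructions.
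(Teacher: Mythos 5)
Your proof is correct, but it reaches the conclusion by a different route than the paper. The induction scheme is the same --- eliminate an innermost existential quantifier from a quantifier-free matrix, handle universals by negation --- and so is the underlying insight, namely that hypotheses (1) and (2) force every quantified variable to carry a constant integer coefficient and every divisibility predicate to have constant modulus. The paper, however, proves the one-quantifier step by re-running its generalized machinery: it applies Lemma~\ref{prepared_lem} and Lemma~\ref{exists_elimination} and observes that under these hypotheses each quantity $\delta_{S_-,S_+}$ is a constant and the set $\mathcal{B}$ has size independent of $t$ (since the matrix is quantifier-free), so the $R$-bounded quantifiers in \eqref{eq:elimination} collapse into finite disjunctions of fixed size. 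You bypass that machinery entirely: after normalizing terms, you freeze the parameter-dependent subterms ($\gamma_0$ and the $\beta_j \cdot x_j$ with $\beta_j$ non-constant) as opaque constants, note that what remains is literally a classical Presburger formula in the quantified variables, and invoke classical Cooper elimination as a black box before substituting the frozen terms back. Your version is more self-contained --- it needs only Cooper's classical theorem rather than the proof apparatus of Theorem~\ref{qe_main} --- and it isolates exactly why (1) and (2) suffice; the price is the extra soundness check that substituting the terms $p_i$ is compatible with the parametric semantics (Cooper's equivalence is universally valid over $\Z$, so instantiating the parameters at each $t$ works) and that Cooper's syntactic operations (integer rescalings, least common multiples of moduli, substitution of boundary terms) never attach a non-constant scalar to a bound variable or create a non-constant modulus --- both of which you do verify. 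The paper's route, by contrast, obtains the theorem as an almost immediate corollary of lemmas already proved, with one uniform algorithm serving both Theorem~\ref{qe_main} and Theorem~\ref{qe_criterion}.
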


For example, consider the case when $X = \N$ and $R = \Z[t]$. Then condition (2) does not apply to $\exists y \left[ t \cdot y = x\right]$ (which is equivalent to the divisibility condition $D_t(x)$), but it does apply to $\exists y \left[ t \cdot x = y \right]$ (which trivially holds for any value of $x$). Condition (2) allows $\varphi$ to contain subformulas such as $x \leq p(t)$ where $x$ is any variable and $p(t) \in \Z[t]$.

To prove Theorems~\ref{qe_main} and \ref{qe_criterion}, we adapt Cooper's quantifier elimination procedure from \cite{cooper} for classical Presburger arithmetic. Note that if we fix a value for the parameter $t$, then a bounded quantifier such as $\exists x \left[(0 \leq x \leq f(t)) \wedge \ldots \right]$ is equivalent to a finite disjunction, and in this case our procedure reverts to Cooper's algorithm. What needs to be checked carefully is the dependence on $t$.

In Section 2, we will define everything carefully and compare our results with previous work. Section 3 is devoted to the proofs of Theorems~\ref{qe_main} and \ref{qe_criterion}.

We would like to thank Tristram Bogart for many useful discussions as well as first bringing Woods's work to our attention, and Kevin Woods for showing us the prior work by Lasaruk and Sturm. We also thank the anonymous referee for their careful reading and many useful suggestions for improving an earlier version of this paper.

\section{$R$-parametric Presburger families and $R$-bounded quantifiers}

In this section we set notation and carefully define the notions of truth and logical equivalence for first-order $\mathcal{L}_R$-formulas.

Throughout, we use standard terminology from first-order logic (atomic formulas, variables, \emph{et cetera}). A ``formula'' always means a first-order formula. We work in the languages $\mathcal{L}_R$ and $\mathcal{L}_R^+$ defined above. 

$R$ always denotes some subring of the ring of all functions from $X$ into $\Z$ where $X$ is some fixed domain, and we assume that $R$ contains every constant function from $X$ into $\Z$.

The truth definition for first-order formulas in $\mathcal{L}_R$ or in $\mathcal{L}^+_R$ is not quite the standard one, since it will depend on a \emph{parameter} $t \in X$ at which all functions $\alpha \in R$ are to be evaluated.

\begin{definition}
An \emph{$R$-parametrized Presburger formula} is a first-order formula in the language $\mathcal{L}^+_R$. The letter $t$ always denotes a \emph{parameter} in $X$ used for evaluating $R$-parametrized Presburger formulas.

Given $t \in X$ and an $\mathcal{L}^+_R$-formula $\varphi$, we define the $\mathcal{L}^+_{Pres}$-formula $\varphi_t$ to be the translation of $\varphi$ defined recursively so that each term $\alpha \cdot s$ in $\varphi$, where $\alpha \in R$ and $s$ is a term, is replaced by one of the following:

\begin{enumerate}
\item If $\alpha(t) > 0$, then $\alpha \cdot s $ is replaced in $\varphi_t$ by $s + s + \ldots + s$ with $\alpha(t)$ repetitions of $s$;
\item if $\alpha(t) < 0$, then $\alpha \cdot s$ is replaced in $\varphi_t$ by $-(s + s + \ldots + s)$ with $\alpha(t)$ repetitions of $s$;
\item if $\alpha(t) = 0$, then $\alpha \cdot s$ is replaced in $\varphi_t$ by the constant symbol $0$.
\end{enumerate}

Similarly, the atomic formula $D_\alpha(s)$ in $\varphi$ is replaced in $\varphi_t$ by $D_{\alpha(t)}(s)$, using the convention that if $\alpha(t) = 0$ then $D_{\alpha(t)}(s)$ is always false.

\end{definition}

The definition of $\varphi_t$ above allows us to talk about the \emph{truth} of $\mathcal{L}^+_R$-formulas relative to a parameter $t \in X$: given an $\mathcal{L}^+_R$-formula $\varphi(x_1, \ldots, x_d)$ whose free variables are contained in $\{x_1, \ldots, x_d\}$, $t \in X$, and $(k_1, \ldots, k_d) \in \Z^d$, we will write $$\models \varphi_t(k_1, \ldots, k_d)$$ just in case the $\mathcal{L}^+_{Pres}$-formula $\varphi_t$ is true in $\Z$ with the variable $x_i$ evaluated as $k_i$.

\begin{definition}
\label{equiv}
Given two $\mathcal{L}^+_R$-formulas $\varphi(x_1, \ldots, x_d)$ and $\psi(x_1, \ldots, x_d)$, we write $$\varphi(x_1, \ldots, x_d) \models \psi(x_1, \ldots, x_d)$$ just in case for \emph{every} $t \in X$, we have $$\varphi_t(x_1, \ldots, x_d) \models \psi_t(x_1, \ldots, x_d),$$ or in other words, for every $t \in X$ and every $(k_1, \ldots, k_d) \in \Z^d$, $$\models \varphi_t(k_1, \ldots, k_d) \Leftarrow \models \psi_t(k_1, \ldots, k_d).$$ We say that $\varphi(x_1, \ldots, x_d)$ and $\psi(x_1, \ldots, x_d)$ are \emph{logically equivalent} if $\varphi(x_1, \ldots, x_d) \models \psi(x_1, \ldots, x_d)$ and $\psi(x_1, \ldots, x_d) \models \varphi(x_1, \ldots, x_d)$.

\end{definition}

It is clear that every $\mathcal{L}^+_R$-formula is logically equivalent to some $\mathcal{L}_R$-formula.

\begin{definition}
An \emph{$R$-parametrized Presburger family} is an $X$-indexed family of sets $S_t \subseteq \Z^d$ for some fixed $d \in \N$ such that there is an $\mathcal{L}^+_R$-formula $\varphi(x_1, \ldots, x_d)$ such that for each $t \in X$, $$S_t = \{ (k_1, \ldots, k_d) \in \Z^d : \, \, \, \models \varphi_t(k_1, \ldots, k_d) \}.$$
\end{definition}

\begin{definition}
Given an $\mathcal{L}^+_R$-formula $\varphi(x_1, \ldots, x_d, z)$ whose free variables are among $\{x_1, \ldots, x_d, z\}$ and $\alpha \in R$, we denote by $$\bigvee_{z=0}^{\alpha(t)} \varphi(x_1, \ldots, x_d, z)$$ the $\mathcal{L}^+_R$-formula $$\exists z \left[0 \leq z \leq \alpha(t) \wedge \varphi(x_1, \ldots, x_d, z) \right],$$ in which $z$ is part of an \emph{$R$-bounded existential quantifier}. 

An \emph{$R$-bounded universal quantifier} applied to $\varphi$ yields $$\forall z \left[ 0 \leq z \leq \alpha(t) \rightarrow \varphi(x_1, \ldots, x_d, z) \right]$$ for some $\alpha \in R$.

An \emph{$\mathcal{L}^+_R$-formula with $R$-bounded quantifiers} is a member of the smallest class of $\mathcal{L}^+_R$-formulas containing all atomic formulas and closed under Boolean combinations and the formation of $R$-bounded quantifiers (both existential and universal).

\end{definition}

\begin{example}
\label{intervals}
Even when $R = \Z[t]$ and $X = \N$, it is not the case that every $\mathcal{L}_R$-formula is logically equivalent to a quantifier-free formula in $\mathcal{L}^+_R$. For example, consider an $\mathcal{L}_R$-formula $\varphi(x)$ which defines the set $$S_t := \bigcup_{i=0}^{t-1} \left[ 2i \cdot t, \,  (2i + 1) \cdot t\right].$$ Suppose that $\theta(x)$ is a quantifier-free $\mathcal{L}^+_R$-formula such that for every $t \in \N$, $\models \forall x \left[\theta_t(x) \rightarrow \varphi_t(x) \right].$ We may assume that $$\theta(x) = \bigvee_{1 \leq i \leq n} \psi_i(x)$$ where each formula $\psi_i$ is a conjunction of atomic formulas and negations of atomic formulas. If $i \in \{1, \ldots, n\}$ and there is some $t > 1$ such that $(\psi_i)_t(\Z)$ intersects more than one of the intervals $I_j := \left[2j \cdot t, \, (2j + 1) \cdot t\right]$, then one of the conjuncts of $\psi_i(x)$ must be a divisibility condition $D_{\alpha(t)}(x)$, and furthermore $\alpha(t)$ must have degree $1$ (otherwise $\psi_i(x)$ could not imply $\varphi(x)$); but then $D_{\alpha(t)}(\Z)$ can intersect each $I_j$ \emph{at most twice}. (Note that if $\alpha(t) = t-c$ where $c$ is a constant, then  $D_{\alpha(t)}(\Z)$ may intersect an interval $I_j$ more than once.) So every $(\psi_i)_t(\Z)$ is either always a subset of a single $I_j$, or else always intersects each $I_j$ at most once, and when $t > n$ the set $S_t$ (which is comprised of $t$ disjoint intervals of length $t$) cannot be covered by all of $\theta_t(\Z)$.
\end{example}

Recently Petr Glivick\'y has made a detailed classification of expansions of Presburger arithmetic by a unary function representing multiplication by a nonstandard element (see \cite{glivicky}). His methods may be adaptable to parametric Presburger families, and they suggest the following:

\begin{conjecture}
If $R = \Z[t]$ and $X = \N$, then every $\mathcal{L}_R$-formula is logically equivalent to a quantifier-free formula in the language $$\mathcal{L}'_R = \mathcal{L}_R \cup \{g_{\alpha(t)} : \alpha \in R\}$$ where $g_{\alpha(t)}$ is a unary function symbol interpreted as $$g_{\alpha(t)}(x) = \max \{q \in \Z : q \cdot |\alpha(t)| \leq x \}$$ (letting $g_{\alpha(t)}(x) = 0$ in case $\alpha(t) = 0$).
\end{conjecture}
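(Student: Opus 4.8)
The plan is to build on Theorem~\ref{qe_main} and to eliminate the $R$-bounded quantifiers one at a time using the new division functions. First I would record that the divisibility predicates are already quantifier-free definable in $\mathcal{L}'_R$: for $\alpha \in R$ and any term $s$, the condition $D_\alpha(s)$ holds exactly when $\alpha \cdot g_{\alpha(t)}(s)$ equals $s$ or $-s$ (using the convention that this fails when $\alpha(t) = 0$). Hence by Theorem~\ref{qe_main} it suffices to show that every $\mathcal{L}^+_R$-formula with $R$-bounded quantifiers is logically equivalent to a quantifier-free $\mathcal{L}'_R$-formula, and by induction on the quantifier structure this reduces to eliminating a single innermost bounded existential quantifier $\exists z\,[\,0 \le z \le \alpha(t) \wedge \psi(z,\overline{x})\,]$, where $\psi$ is quantifier-free in $\mathcal{L}'_R$ (the universal case being dual).

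Next I would remove the division functions from inside $\psi$ by the standard device of naming each subterm $\lfloor (c\cdot z + \sigma)/|\beta(t)|\rfloor$ (where $c \in R$ and $\sigma$ does not involve $z$) by a fresh variable $u$ subject to the linear constraints $0 \le (c \cdot z+\sigma) - |\beta(t)|\cdot u \le |\beta(t)| - 1$; signs of $\beta(t)$ and the finitely many small values of $t$ are handled by quantifier-free case splits on terms $D_k(t)$ and $t \le c$. Carrying this out for every floor subterm, and using the Hermite identity $g_{\beta\gamma} = g_{\gamma} \circ g_{\beta}$ to flatten nested occurrences beforehand, turns the matrix into a quantifier-free \emph{linear} formula $\chi$ in the variables $z, u_1, \dots, u_r, \overline{x}$ with coefficients in $R$, at the cost of a block of fresh existential quantifiers. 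Thus the formula becomes $\exists z\,\exists u_1 \cdots \exists u_r\,[\,0 \le z \le \alpha(t) \wedge \chi\,]$.

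Now I would eliminate this block by Cooper's procedure. Eliminating each variable in turn produces new divisibility atoms (which are quantifier-free in $\mathcal{L}'_R$ by the first paragraph) together with, at the final step, a test for nonemptiness of the intersection of an interval $[L,U]$ (whose endpoints are $\mathcal{L}'_R$-terms in $\overline{x}$ and $t$) with a finite union of arithmetic progressions. Combining the progressions via the Chinese Remainder Theorem into a single progression modulo $M := \mathrm{lcm}(m_1(t),\dots,m_k(t))$, this test becomes ``the least element of the progression that is $\ge L$ is $\le U$'', which can be written quantifier-free using $g_{M}$ to compute that least element --- provided $M$ is a function symbol of $\mathcal{L}'_R$, i.e.\ provided $M \in R$.

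The central obstacle, and the reason the statement is only conjectural, is that $M$ is in general \emph{not} a polynomial in $t$: already $\mathrm{lcm}(t,2)$ equals $t$ or $2t$ according to the parity of $t$, so $g_M$ need not belong to $\mathcal{L}'_R$. To overcome this I would split into finitely many cases according to the residue of $t$ modulo a suitable integer $N$ (each case quantifier-free definable, since $D_N(t)$ is available), chosen so that on each class every relevant $\gcd(\alpha(t),\beta(t))$, and hence every relevant $M$, agrees with a genuine element of $R = \Z[t]$. That such a splitting exists is a number-theoretic statement: for $f,g \in \Z[t]$ the function $t \mapsto \gcd(f(t),g(t))$ is quasi-polynomial, being --- up to a bounded periodic factor controlled by the resultant of the coprime parts of $f$ and $g$ --- the value of the polynomial $\gcd$ of $f$ and $g$. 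Establishing this and reconciling it with the paper's demand that a \emph{single} quantifier-free formula serve for all $t \in \N$ simultaneously is where I expect the real work to lie; the analysis of the nonstandard model in which $t$ is a fixed nonstandard integer and the $g_{\alpha(t)}$ become honest integer-division functions, as carried out by Glivick\'y in \cite{glivicky}, together with a compactness argument to descend to the standard $t$, is the tool I would try to bring to bear on precisely this point.
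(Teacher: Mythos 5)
This statement is not proved in the paper at all: it is explicitly left as an open conjecture (suggested by Glivick\'y's work \cite{glivicky}), so there is no proof of the paper's to compare yours against, and your proposal can only be judged as a standalone argument --- which, as you yourself concede, it is not. To give credit where due: your opening reduction is sound (the quantifier-free definability of $D_\alpha$ from $g_{\alpha(t)}$ is exactly the remark the paper makes immediately after stating the conjecture, and invoking Theorem~\ref{qe_main} to reduce to eliminating $R$-bounded quantifiers is the right first move), and the number-theoretic fact you cite is true: writing $f = D f_1$, $g = D g_1$ with $D$ the polynomial gcd, the function $t \mapsto \gcd(f(t),g(t))$ is $|D(t)|$ times a periodic function bounded by a B\'ezout constant of $f_1, g_1$.

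However, there are two concrete gaps beyond the ``real work'' you flag. First, your splitting trick repairs the gcd but not the lcm: on a residue class the gcd of $f(t),g(t)$ agrees with $|c\,D(t)|$ for some constant $c$, hence with an element of $\Z[t]$ up to sign, but the lcm is then $|f(t)g(t)|/(c|D(t)|)$, which in general is \emph{not} the value of any integer polynomial --- e.g.\ $\mathrm{lcm}(t,t+2) = t(t+2)/2$ on even $t$, and $t(t+2)/2 \notin \Z[t]$ --- so $g_M$ is still not a symbol of $\mathcal{L}'_R$ even after the split; you would have to work instead with a bounded union of progressions modulo the product $f(t)g(t)$. Second, and more fundamentally, the sentence ``eliminating each variable in turn produces \dots at the final step, a test for nonemptiness of an interval against finitely many progressions'' misdescribes what Cooper's step does in the parametric setting. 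One application of the procedure is Lemma~\ref{exists_elimination}, whose output contains disjunctions of length $\delta_{S_-,S_+}(t)$ --- that is, fresh $R$-bounded quantifiers --- and their matrices still contain all the not-yet-eliminated variables of your block (through the terms $s \in \mathcal{B}$ and the substituted formulas $\varphi(\overline{x}, s+z)$). So eliminating one variable of $\exists z\,\exists u_1 \cdots \exists u_r$ spawns a new bounded variable and the block never shrinks; your $g_M$-collapse applies only once a \emph{single} quantified variable remains, since otherwise the interval endpoints and residues involve quantified variables. This non-termination is precisely why the paper's method stops at bounded quantifiers in Theorem~\ref{qe_main}, and supplying a termination argument is the missing heart of any proof of the conjecture. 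The closing appeal to Glivick\'y plus compactness does not supply it: Glivick\'y analyzes multiplication by a single fixed nonstandard scalar, and compactness gives no mechanism for converting facts about one nonstandard value of $t$ into a single formula that is correct for every standard $t \in \N$ simultaneously, which is what the paper's notion of logical equivalence demands.
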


In other words, $g_{\alpha(t)}(x)$ gives the ``floor function'' applied to the quotient $$\frac{x}{|\alpha(t)|}$$ (except in the case when $\alpha(t) = 0$). The equivalence between $\mathcal{L}'_R$-formulas is defined in the natural way just as in Definition~\ref{equiv} above. Note that the divisibility predicate $D_\alpha$ can be defined from this function without quantifiers, since $D_{\alpha(t)}(x)$ is true if and only if $\alpha(t) \neq 0$ and $x = \alpha(t) \cdot g_{\alpha(t)} (x).$

\subsection{Comparison with previous work}

After we had already found a proof of Theorem~\ref{qe_main}, we became aware of similar prior results by Weispfenning \cite{weis} and Lasaruk and Sturm \cite{lasaruksturmweakQE}. Here we briefly summarize their work and what is new in our work.

In \cite{weis}, Weispfenning introduced \emph{Uniform Presburger Arithmetic} (UPA), a two-sorted extension of Presburger arithmetic which is essentially the same as $R$-parametrized Presburger arithmetic. In Weispfenning's language, there is a \emph{scalar sort} (corresponding to our ring $R$ of functions) and a \emph{vector sort} which is a model of Presburger arithmetic, as well as a binary scalar multiplication operation $(\alpha, x) \mapsto \alpha \cdot x$ defined for scalars $\alpha$ and elements $x$ of the vector sort. Quantification is permitted over the vector sort but not over the scalar sort, just as we do not allow quantification over $R$. Assuming that the scalar sort is closed under the ring operations plus maximum and least-common-multiple operators, Weispfenning shows that any formula in UPA is equivalent to one with bounded quantifiers and congruence relations. Theorem~\ref{qe_main} is more general: we assume only that $R$ is closed under the ring operations. 

We should remark that although Weispfenning's  bounded quantifier elimination result for UPA from \cite{weis} is very similar to our main result, the procedure he uses is different. Our method is an adaptation of Cooper's algorithm, in which one of the key steps is to eliminate non-unitary coefficients in front of quantified variables before eliminating them. For example, suppose our original input is the formula $$\exists x \left[ 2x \leq a_1 \wedge 3x \equiv_5 a_2 \right]$$ (where $a_1, a_2$ are terms not involving $x$ and ``$s \equiv_m t$'' can be taken as shorthand for $D_m(t - s)$). Then Cooper's algorithm would first use the substitution $y = 6x$ to replace this by the equivalent formula $$\exists y \left[  y \leq 3 a_1 \wedge y \equiv_{10} 2 a_2 \right]$$ in preparation for eventually eliminating the quantifier $\exists y$. (Of course the formula happens to be trivially always true in this case, but we chose a simple example to illustrate the syntactic procedure.)  In contrast, the algorithm in \cite{weis} applied to the same formula would use a different linear substitution, $2x = a_1 + z$, to eliminate the unbounded quantifier $\exists x$ in one step, yielding (modulo some trivial reductions) the equivalent formula $$\left(0 \leq a_1 \wedge 0 \equiv_5 a_2 \right) \vee \exists z \in [-20, 0] \, \left( a_1 + z \equiv_2 0 \wedge 3 a_1 + 3z \equiv_{10} 2 a_2 \right)$$ with the new variable $z$ appearing in a bounded quantifier.

In \cite{lasaruksturmweakQE}, Lasaruk and Sturm studied $\Z$ with the ordered ring language with full binary multiplication, not just unary multiplication by a coefficient $\alpha(t)$.  They also have ternary relation symbols for $x \equiv_m y$ where any terms can be substituted for $x, y,$ and $m$. However, they restrict consideration to \emph{linear formulas}.  An atomic formula $\theta$ is called \emph{linear} in the set of variables $\{x_1, \ldots, x_n\}$ if it contains no products $x_i \cdot x_j$ (including $x_i \cdot x_i$), and if $\theta$ is a congruence condition $s \equiv_m s'$, then no variable $x_i$ occurs in the term $m$; a general formula is linear if each of its atomic subformulas is linear in the set of its variables which are within the scope of a quantifier. The \emph{full linear theory of $\Z$} is the theory restricted to linear formulas.
 
The main result of \cite{lasaruksturmweakQE} is what they call ``weak quantifier elimination'' for the full linear theory of $\Z$: any linear formula is logically equivalent to a linear formula with only bounded quantifiers. For them, ``bounded quantifiers'' have a slightly more general meaning than in this paper: they allow generalized disjunctions of the form $$\bigvee_{k : \models \psi(\overline{x}, k)} \varphi(\overline{x}, k)$$ over any given formula $\psi(\overline{x}, z)$, as long as $\psi(\overline{a}, \Z)$ is finite for any tuple $\overline{a}$ from $\Z$ of the right length.

In the linear formulas $\varphi(x_1, \ldots, x_n)$ considered by Lasaruk and Sturm, it is possible that there are terms involving products $x_i \cdot x_j$ of the free variables. There is no obstacle to applying our proof of Theorem~\ref{qe_main} to handle such formulas; our proof of the crucial Lemma~\ref{prepared}, for example, would only fail if there were products $y \cdot y$ between a quantified variable $y$ and itself, which is disallowed in linear formulas.

Theorem~\ref{qe_main} could be derived from Lasaruk and Sturm's result as follows: if $\varphi(\overline{x})$ is an $\mathcal{L}_R^+$-formula which involves the elements $\alpha_1, \ldots, \alpha_m \in R$ in the scalar multiplications $f_{\alpha_i}$ or $D_{\alpha_i}$, then we can replace each $\alpha_i$ by a new free variable $y_i$ and obtain a formula $\varphi'(\overline{x}, \overline{y})$ which is linear in the sense of Lasaruk and Sturm. For instance, the formula $D_{\alpha_i}(s)$ can be translated as $\exists z \left[z \cdot y_i = s\right]$ where $y_i$ replaces $\alpha_i$ and $z$ is some new variable. Then the Lasaruk-Sturm result implies that $\varphi'(\overline{x}, \overline{y})$ is equivalent to a linear formula with only bounded quantifiers, and this could be translated back into an $\mathcal{L}^+_R$-formula with bounded quantifiers.

Finally, we note that for the proof of Woods' combinatorial conjectures in \cite{BGW}, we needed not only the bounded quantifier elimination result Theorem~\ref{qe_main} but also the criterion in Theorem~\ref{qe_criterion} which gives a sufficient condition for eliminating quantifiers outright in $\mathcal{L}^+_R$. This latter result can be read off easily from our proof of Theorem~\ref{qe_main} presented below, but it is not (in any immediately apparent way) a corollary of the other quantifier-bounding procedures we found in the literature.

\section{Proofs of Theorems~\ref{qe_main} and \ref{qe_criterion}}

In this section, we will prove Theorem~\ref{qe_main} via a sequence of lemmas (up to and including Lemma~\ref{exists_elimination}), and then we explain how the same argument can be used to prove Theorem~\ref{qe_criterion}. Our strategy is to generalize the method of elimination for ordinary Presburger arithmetic discovered by Cooper in \cite{cooper}. A feature of Cooper's quantifier elimination algorithm which is useful for us is that it does not involve reducing a quantifier-free formula to disjunctive or conjunctive normal form.

Throughout this section, we will fix some ring $R$ of functions from $X$ into $\Z$ and some $\mathcal{L}^+_R$-formula with $R$-bounded quantifiers $\varphi(x_1, \ldots, x_n, y)$, with the aim of finding an $\mathcal{L}^+_R$-formula with $R$-bounded quantifiers which is logically equivalent to $\exists y \left[ \varphi(x_1, \ldots, x_n, y) \right]$. We will further assume that the variable $y$ does not occur in any $R$-bounded quantifier in $\varphi$ (after renaming quantified variables as needed).

Instead of putting $\varphi$ into disjunctive normal form, we will make it satisfy the following condition:

\begin{definition}
\label{prepared}
An $\mathcal{L}^+_R$-formula with $R$-bounded quantifiers is called \emph{normalized (in $y$)} if it can be constructed using only the positive Boolean operators ($\vee$ and $\wedge$) and $R$-bounded quantifiers from the following types of \emph{basic formulas}:

\begin{enumerate}
\item $y < a$,
\item $b < y$,
\item $D_\alpha(y + c)$,
\item $\neg D_\beta(y + d)$, and
\item atomic or ``negatomic'' (negations of atomic) formulas not involving the variable $y$,
\end{enumerate}

where $a, b, c$ and $d$ are terms in $\mathcal{L}^+_R$ which do not contain the variable $y$ and $\alpha, \beta \in R$.

\end{definition}

\begin{lem}
\label{prepared_lem}
There is some formula $\widetilde{\varphi}(x_1, \ldots, x_n, y')$  which is normalized in $y'$ and has $R$-bounded quantifiers such that  $\exists y \, \varphi(x_1, \ldots, x_n, y) $ is logically equivalent to $\exists y' \, \widetilde{\varphi}(x_1, \ldots, x_n, y')  $.

\end{lem}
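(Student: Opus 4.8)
The plan is to adapt the preparatory phase of Cooper's algorithm, tracking carefully how the coefficient of $y$ in each atomic subformula depends on the parameter $t$. First I would put $\varphi$ into negation normal form: using De Morgan's laws together with the dualities $\neg\exists z[0\leq z\leq\alpha\wedge\psi]\equiv\forall z[0\leq z\leq\alpha\to\neg\psi]$ and $\neg\forall z[0\leq z\leq\alpha\to\psi]\equiv\exists z[0\leq z\leq\alpha\wedge\neg\psi]$, push all negations down to the atomic level, leaving only $\vee$, $\wedge$, and the two kinds of $R$-bounded quantifiers, with negations on atoms only. Since each $f_\alpha$ is additive and $f_\alpha\circ f_\beta=f_{\alpha\beta}$, every $\mathcal{L}^+_R$-term is a linear form with coefficients in $R$, so I can isolate $y$ in each literal containing it, obtaining one of $\gamma y<s$, $s<\gamma y$, $\gamma y=s$, $D_\alpha(\gamma y+s)$, or $\neg D_\beta(\gamma y+s)$ with $\gamma,\alpha,\beta\in R$ and $s$ a $y$-free term. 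Negated inequalities are absorbed via the integer identities $\neg(u<v)\equiv v<u+1$, and an equality $\gamma y=s$ is split as $\gamma y<s+1\wedge s-1<\gamma y$, so it suffices to normalize inequality and (negated) divisibility literals.

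The coefficients of $y$ that occur, say $\gamma_1,\ldots,\gamma_k\in R$, are the source of the parametric difficulty: each $\gamma_i(t)$ may be positive, negative, or zero depending on $t$, whereas Cooper's scaling trick presupposes fixed nonzero coefficients. My remedy is to case-split on the sign pattern. For each function $\sigma\colon\{1,\ldots,k\}\to\{+,-,0\}$ let $G_\sigma$ be the conjunction of the $y$-free atomic formulas $0<\gamma_i$, $\gamma_i<0$, or $\gamma_i=0$ prescribed by $\sigma$ (where $\gamma_i$ abbreviates $f_{\gamma_i}(1)$). Then $(G_\sigma)_t$ holds precisely when the signs of the $\gamma_i(t)$ match $\sigma$, so exactly one guard holds at each $t$, each $G_\sigma$ is a positive combination of type-(5) basic formulas, and $\exists y\,\varphi\equiv\bigvee_\sigma(G_\sigma\wedge\exists y\,\varphi)$.

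Within the guard $G_\sigma$ I set $\ell_\sigma=\prod_{i:\,\sigma(i)\neq 0}\gamma_i$, which is nonzero wherever $G_\sigma$ holds, and perform the change of variable $y'=\ell_\sigma y$; this is sound because $\ell_\sigma y$ ranges exactly over the multiples of $\ell_\sigma$, and it contributes the conjunct $D_{\ell_\sigma}(y')$ (a type-(3) formula with $c=0$). For an index $i$ with $\sigma(i)=0$ the term $\gamma_i y$ evaluates to $0$ for every $t$ satisfying $G_\sigma$, so its literal becomes $y'$-free and lands in type (5). For $\sigma(i)\neq 0$, writing $\delta_i=\ell_\sigma/\gamma_i=\prod_{j\neq i,\,\sigma(j)\neq 0}\gamma_j\in R$, the literal $\gamma_i y<s$ turns into $y'<\delta_i s$ or $\delta_i s<y'$ according to whether $\operatorname{sign}(\delta_i)=\prod_{j\neq i,\,\sigma(j)\neq 0}\sigma(j)$ is positive or negative (a sign fixed by $\sigma$), giving a type-(1) or type-(2) basic formula; a divisibility literal $D_\alpha(\gamma_i y+s)$ becomes $D_{\delta_i\alpha}(y'+\delta_i s)$ via $\alpha\mid u\Leftrightarrow\delta_i\alpha\mid\delta_i u$ (valid since $\delta_i\neq 0$ under $G_\sigma$), a type-(3) formula, and its negation a type-(4) formula. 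Carrying out these replacements inside $\varphi$ (the substitution propagates harmlessly through the $R$-bounded quantifiers of $\varphi$, which by hypothesis do not involve $y$) yields $\widetilde\varphi_\sigma(y')$ normalized in $y'$, with $\exists y\,\varphi\equiv\exists y'[D_{\ell_\sigma}(y')\wedge\widetilde\varphi_\sigma(y')]$ for parameters satisfying $G_\sigma$.

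Finally, since each $G_\sigma$ is $y'$-free I can pull the existential out of the disjunction and set
$$\widetilde\varphi(x_1,\ldots,x_n,y')\;=\;\bigvee_\sigma\left(G_\sigma\wedge D_{\ell_\sigma}(y')\wedge\widetilde\varphi_\sigma(y')\right),$$
which is normalized in $y'$ and satisfies $\exists y\,\varphi\equiv\exists y'\,\widetilde\varphi$. I expect the main obstacle to be exactly the bookkeeping forced by the parameter: verifying that the change of variable $y'=\ell_\sigma y$ is sound only after the sign pattern has been pinned down by $G_\sigma$ (so that $\ell_\sigma\neq 0$), and that each inequality is flipped in the correct direction according to the parameter-independent sign of $\delta_i$. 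The remaining steps — negation normal form, isolating $y$, and clearing divisibilities — are routine once one observes that the sign and vanishing of every coefficient $\gamma_i(t)$ is itself captured by a $y$-free atomic $\mathcal{L}^+_R$-formula.
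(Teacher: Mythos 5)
Your proposal is correct and follows essentially the same route as the paper's proof: negation normal form, isolating the coefficients of $y$ in each literal, a finite case split on the parameter-dependent behavior of those coefficients, a change of variable $y' = \ell_\sigma \cdot y$ compensated by the divisibility conjunct $D_{\ell_\sigma}(y')$, and a final disjunction of guarded cases. The only differences are cosmetic: you split on full sign patterns ($3^k$ guards) where the paper splits only on zero/nonzero patterns (its $\psi_{S'}$) and handles signs by disjunctions such as $[\alpha' > 0 \wedge \cdots] \vee [\alpha' < 0 \wedge \cdots]$ inside the rewritten literals, and you leave out the routine case of negated equalities $\gamma y \neq s$, which the paper (its ``type 6'') reduces to a disjunction of strict inequalities.
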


\begin{proof}
Essentially we would like to repeat an idea from \cite{cooper} and rewrite each atomic subformula in terms of $y' = \nu \cdot y$, where $\nu \in R$ is a common multiple of all the coefficients of $y$ occurring in $\varphi$. However, we must do something a little more complicated than this to take into account that some of these coefficients may be $0$ for certain values of $t \in X$.

First, we move all negations in $\varphi$ inward as far as possible, applying De Morgan's law repeatedly, including to conjunctions and disjunctions of $\alpha(t)$ formulas where $\alpha \in R$, then eliminate double negations. Thus we may assume that $\varphi$ is constructed from atomic and negatomic formulas via \emph{positive} Boolean combinations and $R$-bounded quantifiers.

We may further assume that each atomic or negatomic subformula of $\varphi$ is an instance of one of the following types, where Greek letters $\alpha, \ldots \theta \in R$ and Latin letters $a, \ldots, f$ are terms not involving the variable $y$:

\begin{enumerate}
\item $\alpha \cdot  y < a$,
\item $b < \beta \cdot y$,
\item $D_\gamma( \delta \cdot y + c)$,
\item $\neg D_\epsilon( \zeta \cdot y + d)$, 
\item $\eta \cdot y = e$,
\item $\theta \cdot y \neq f$, and
\item atomic or negatomic formulas not involving the variable $y$.
\end{enumerate}

Next, eliminate atomic formulas of type 5 from $\varphi$ by replacing them with $$\left( \eta \cdot y > e-1\right) \wedge \left(\eta \cdot y < e+1\right) $$ (a conjunction of formulas of type 1 and 2), and similarly replace all negatomic formulas of type 6 by disjunctions of inequalities of types 1 and 2. 

Let $S \subseteq R$ be the finite set of all the elements which occur as one of the Greek letters $\alpha, \ldots, \zeta$ in some subformula of type 1 through 4. 

Given any $S' \subseteq S$, let $\nu_{S'}$ be the product of all of the elements of $S'$ and let $\psi_{S'}$ be the formula $$\psi_{S'} := \bigwedge_{\xi \in S'} \xi(t) \neq 0 \wedge \bigwedge_{\xi \in S \setminus S'} \xi(t) = 0.$$

For each subset $S'$ of $S$, we will define a formula $\widetilde{\varphi}_{S'}(x_1, \ldots, x_n, y')$ such that 

\begin{equation}
\label{twiddle}
	\psi_{S'} \rightarrow \forall x_1 \ldots \forall x_n \forall y \left[\varphi(x_1, \ldots, x_n, y) \leftrightarrow \widetilde{\varphi}_{S'}(x_1, \ldots, x_n, \nu_{S'} \cdot y)  \right].
\end{equation}

 We construct $\widetilde{\varphi}_{S'}(x_1, \ldots, x_n, y)$ by modifying each atomic and negatomic subformula of $\varphi$ as follows:

\medskip

$\bullet$ For an atomic subformula $\alpha \cdot  y < a$ of type 1, first suppose that $\alpha \in S'$, in which case there is an $\alpha' \in R$ such that $\alpha \cdot \alpha' = \nu_{S'}$. In this case, we replace every instance of the subformula $\alpha \cdot y < a$ by $$\left[\alpha' > 0 \wedge ( \nu_{S'} \cdot y < \alpha' \cdot  a) \right] \vee [\alpha' < 0 \wedge (\nu_{S'} \cdot  y >  \alpha' \cdot a) ],$$ to which it is clearly equivalent if $\psi_{S'}$ holds. If $\alpha \notin S'$, then we replace $\alpha \cdot y < a$ by $0 < a$.

\medskip

$\bullet$ For an atomic subformula $b < \beta \cdot y$ of type 2, if $\beta \in S'$ and $\beta' \in R$ is chosen so that $\beta \cdot \beta' = \nu_{S'}$, then we replace every instance of it by the formula $$\left[\beta' > 0 \wedge (\beta' \cdot  b <    \nu_{S'} \cdot y) \right] \vee [\beta' < 0 \wedge ( \beta' \cdot b > \nu_{S'} \cdot  y)].$$ If $\beta \notin S'$, replace $b < \beta \cdot y$ by $b < 0$.

\medskip

$\bullet$ For an atomic subformula $D_\gamma( \delta \cdot y + c)$ of type 3, if $\delta \in S'$, then we can choose $\delta' \in R$ such that $\delta \cdot \delta' = \nu_{S'}$, and we replace each instance of $D_\gamma (\delta \cdot y + c)$ by the equivalent formula $$D_{\gamma \cdot \delta'}(\nu_{S'} \cdot y + \delta' \cdot c).$$ In case $\delta \notin S'$, replace $D_\gamma( \delta \cdot y + c)$ by $$D_\gamma( c).$$

\medskip

$\bullet$ A negatomic subformula $\neg D_\epsilon( \zeta \cdot y + d)$ of type 4 is dealt with just like an atomic subformula of type 3.

\medskip

So by the implication~(\ref{twiddle}), $$\psi_{S'} \rightarrow \forall x_1 \ldots \forall x_n \left[\exists y \varphi(x_1, \ldots, x_n, y) \leftrightarrow \exists y' (\widetilde{\varphi}_{S'}(x_1, \ldots, x_n, y')  \wedge D_{\nu_{S'}}(y'))\right] ,$$ and the normalized formula $\widetilde{\varphi}(x_1, \ldots, x_n, y')$ we seek is $$\bigvee_{S' \subseteq S} \psi_{S'} \wedge \widetilde{\varphi}_{S'}(x_1, \ldots, x_n, y')  \wedge D_{\nu_{S'}}(y').$$

\end{proof}

So from now on we will assume that the formula $\varphi(x_1, \ldots, x_n, y)$ is normalized in $y$.

\begin{definition}
\label{phi_inf}
Let $\varphi_{-\infty}(x_1, \ldots, x_n, y)$ be the formula obtained from $\varphi$ by replacing every atomic subformula of the form $y < a$ (of ``type 1'') with $0 = 0$ and every atomic subformula of the form  $b < y$ (of ``type 2'') with $0 \neq 0$.
\end{definition}

The idea behind $\varphi_{-\infty}$ is that for any fixed values of $t \in X$ and $k_1, \ldots, k_n \in \Z$, the truth values of $\varphi_t(k_1, \ldots, k_n, e)$ and $(\varphi_{-\infty})_t(k_1, \ldots, k_n, e)$ will coincide for all sufficiently small $e \in \Z$. The following is straightforward:

\begin{lem}
\label{phi_inf_values}
There is a function $M: \Z^n \times X \rightarrow \Z$ such that for every $e \in \Z$, if $e < M(k_1, \ldots, k_n, t)$, then $$\models \varphi_t(k_1, \ldots, k_n, e) \,  \Leftrightarrow \, \models (\varphi_{-\infty})_t(k_1, \ldots, k_n, e). $$
\end{lem}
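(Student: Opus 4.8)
The plan is to read $M$ off directly from the threshold terms occurring in the type~1 and type~2 basic formulas of the normalized formula $\varphi$. Intuitively, once $t$ and $k_1, \ldots, k_n$ are fixed, every subformula $y < a$ becomes true and every subformula $b < y$ becomes false as soon as $y$ is assigned a value $e$ lying strictly below all of the relevant thresholds, and these are precisely the truth values that $0 = 0$ and $0 \neq 0$ receive in $\varphi_{-\infty}$. The only point requiring care is that, because $\varphi$ carries $R$-bounded quantifiers, the terms $a$ and $b$ may contain variables bound by quantifiers enclosing the subformula, so each threshold is really an entire finite family of integers as those bound variables range over their (finite) domains.

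To make this precise, fix $t \in X$ and $k_1, \ldots, k_n \in \Z$. For each type~1 subformula $y < a$ and each type~2 subformula $b < y$ of $\varphi$, the terms $a$ and $b$ do not contain $y$ but may contain $x_1, \ldots, x_n$ together with variables $z_1, \ldots, z_m$ bound by the $R$-bounded quantifiers enclosing that subformula. For the fixed parameter $t$, each such $z_j$ ranges over the finite set $\{ q \in \Z : 0 \leq q \leq \beta_j(t) \}$ (empty when $\beta_j(t) < 0$). Evaluating every such $a$ and $b$ at $x_i = k_i$ and at every admissible tuple of values for the enclosed bound variables therefore yields a finite set $T(k_1, \ldots, k_n, t) \subseteq \Z$ of threshold integers. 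We define
$$M(k_1, \ldots, k_n, t) := \min T(k_1, \ldots, k_n, t)$$
if this set is nonempty, and $M(k_1, \ldots, k_n, t) := 0$ otherwise (in which case $\varphi$ has no type~1 or type~2 subformula at all and $\varphi = \varphi_{-\infty}$ syntactically).

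The verification proceeds by structural induction on subformulas $\chi$ of $\varphi$, carried out simultaneously over every admissible assignment of values to the variables bound by the quantifiers enclosing $\chi$. The induction claim is that for every $e < M(k_1, \ldots, k_n, t)$ and every such assignment, $\chi_t$ and $(\chi_{-\infty})_t$ receive the same truth value. At a type~1 leaf $y < a$, the term $a$ evaluates to a member of $T$, hence to a value that is at least $M > e$, so $y < a$ is true, matching the value of $0 = 0$; dually, a type~2 leaf $b < y$ is false, matching $0 \neq 0$. Leaves of types 3, 4, and 5 are untouched by the transformation of Definition~\ref{phi_inf}, so there $\chi = \chi_{-\infty}$ and agreement is immediate. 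Since the $-\infty$ transformation acts only on leaves it commutes with $\wedge$, $\vee$, and the formation of $R$-bounded quantifiers; in the quantifier step one unfolds $\exists z\,[0 \leq z \leq \beta(t) \wedge \chi']$ (respectively the universal case) into a finite disjunction (respectively conjunction) over the admissible values of $z$ and applies the induction hypothesis to each.

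The argument is genuinely routine rather than subtle: because for $e < M$ the corresponding leaves of $\varphi$ and $\varphi_{-\infty}$ agree in truth value \emph{exactly}, no appeal to monotonicity or positivity of the Boolean structure is needed, since equal truth values at every leaf force equal truth values of the whole formula. The one real obstacle is the bookkeeping around the bound variables: one must ensure $M$ is chosen small enough to lie below every threshold simultaneously, including all of the finitely many values produced by substituting admissible tuples for the enclosed bound variables, and this is exactly what taking the minimum of the finite set $T(k_1, \ldots, k_n, t)$ accomplishes.
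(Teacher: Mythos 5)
Your proof is correct, and it is precisely the ``straightforward'' argument that the paper omits: Lemma~\ref{phi_inf_values} is stated there without proof, the intended argument being exactly yours---choose $M(k_1,\ldots,k_n,t)$ below every value that a type~1 or type~2 threshold term can take once $x_i = k_i$ is fixed and the enclosing bound variables range over their finite domains, observe that then every leaf of $\varphi$ and the corresponding leaf of $\varphi_{-\infty}$ receive the same truth value, and conclude by structural induction (the unfolding of $R$-bounded quantifiers into finite disjunctions/conjunctions being the only bookkeeping). One parenthetical remark is slightly off: the set $T(k_1,\ldots,k_n,t)$ can be empty even when $\varphi$ \emph{does} contain type~1 or type~2 subformulas, namely when all of them sit under bounded quantifiers whose range is empty at this particular $t$; but your induction handles that situation vacuously (the empty disjunction/conjunction agrees on both sides), so the fallback choice $M = 0$ still works and the argument is unaffected.
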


Next we define a parametrized family of sets of terms $\{ \mathcal{B}_t : t \in X\}$ which will include all possible values of the lower bounds $b$ in the atomic subformulas $b < y$ of type 2 in $\varphi$. The definition is recursive, and note that we only have to deal with positive Boolean combinations since $\varphi$ is normalized in $y$.

\begin{definition}
\label{B_set}
 If $\psi$ is a basic formula $b < y$ of type (2) as in Definition~\ref{prepared}, then $\mathcal{B}_t(\psi) = \{b\}$; if $\psi$ is a basic formula of type (1), (3), (4), or (5), then $\mathcal{B}_t(\psi) = \emptyset$. Given normalized formulas $\psi_1$ and $\psi_2$, $$\mathcal{B}_t(\psi_1 \vee \psi_2) = \mathcal{B}_t(\psi_1) \cup \mathcal{B}_t(\psi_2)$$ and $$\mathcal{B}_t(\psi_1 \wedge \psi_2) = \mathcal{B}_t(\psi_1) \cup \mathcal{B}_t(\psi_2).$$ Given a normalized formula $\psi(x_1, \ldots, x_n, z)$ and letting $\tilde{c}$ be an $\mathcal{L}_{Pres}$-term representing $c \in \Z$, $$\mathcal{B}_t\left(\bigvee_{z=0}^{\alpha(t)} \psi(x_1, \ldots, x_n, z)\right) = \bigcup_{c=0}^{\alpha(t)} \{s(x_1, \ldots, x_n, \widetilde{c}): s \in \mathcal{B}_t(\psi) \textup{ and } 0 \leq c \leq \alpha(t) \}$$ and $$\mathcal{B}_t\left(\bigwedge_{z=0}^{\alpha(t)} \psi(x_1, \ldots, x_n, z)\right) = \bigcup_{c=0}^{\alpha(t)} \{s(x_1, \ldots, x_n, \widetilde{c}): s \in \mathcal{B}_t(\psi) \textup{ and } 0 \leq c \leq \alpha(t) \}.$$ Let $\mathcal{B}_t$ stand for $\mathcal{B}_t(\varphi)$. We will sometimes omit the superscript $t$ below.
\end{definition}

The next Lemma is immediate from the last definition:

\begin{lem}
\label{B_form}
The set $\mathcal{B}_t$ can be written in the form $$\mathcal{B}_t = \{s_i(x_1, \ldots, x_n; \tilde{c}_1, \ldots, \tilde{c}_m) : 0 \leq i \leq k, 0 \leq c_j \leq \alpha_j(t)\},$$ where $s_0, \ldots, s_k$ are fixed $\mathcal{L}^+_R$-terms and $\alpha_0, \ldots, \alpha_m$ are fixed elements of $R$.
\end{lem}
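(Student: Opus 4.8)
The plan is a routine structural induction on the normalized formula, following the recursive clauses of Definition~\ref{B_set} verbatim. Writing $\bar{z}$ for the tuple of variables that a subformula $\psi$ inherits as free from the $R$-bounded quantifiers enclosing it, I would prove, for every normalized subformula $\psi$, that $\mathcal{B}_t(\psi)$ is either empty or of the form
$$\{\, s_i(x_1,\ldots,x_n,\bar{z};\tilde c_1,\ldots,\tilde c_m) : 0\le i\le k,\ 0\le c_j\le\alpha_j(t)\ (1\le j\le m)\,\}$$
for fixed $\mathcal{L}^+_R$-terms $s_0,\ldots,s_k$ and fixed $\alpha_1,\ldots,\alpha_m\in R$ not depending on $t$; note that no member of $\mathcal{B}_t(\psi)$ ever involves the distinguished variable $y$, by construction. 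The invariant to maintain is that $t$ enters \emph{only} through the bounds $\alpha_j(t)$ on the auxiliary parameters $c_j$, while the terms $s_i$ and the scalars $\alpha_j$ are fixed once and for all. Applied to $\psi=\varphi$, where $\bar{z}$ is empty, this is exactly the assertion of the lemma for $\mathcal{B}_t=\mathcal{B}_t(\varphi)$.

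The base cases are immediate: a basic formula of type (2) yields the singleton $\{b\}$ (so $k=0$ and $m=0$), and the basic formulas of types (1), (3), (4), (5) yield the empty set. For the Boolean cases, where $\mathcal{B}_t(\psi_1\vee\psi_2)=\mathcal{B}_t(\psi_1\wedge\psi_2)=\mathcal{B}_t(\psi_1)\cup\mathcal{B}_t(\psi_2)$, I would take the two families supplied by the inductive hypothesis, rename their auxiliary parameters so that the two blocks are disjoint, and form the single family whose terms are all the $s_i$ from both families (each regarded as constant in the parameters it did not originally involve) and whose bounds are all the $\alpha_j$ from both families. Because each term ignores the foreign parameters, letting them range adds no new elements, so this family is precisely the union and again has the required shape.

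The heart of the induction is the bounded-quantifier clause, which is identical for $\bigvee$ and $\bigwedge$. Suppose, by the inductive hypothesis applied to $\psi$, that
$$\mathcal{B}_t(\psi)=\{\,s_i(x_1,\ldots,x_n,z,\bar{w};\tilde c_1,\ldots,\tilde c_m):0\le i\le k,\ 0\le c_j\le\alpha_j(t)\,\},$$
where the inherited tuple of $\psi$ is $(z,\bar{w})$ with $z$ the variable about to be bounded. Definition~\ref{B_set} then gives
$$\mathcal{B}_t\Big(\bigvee_{z=0}^{\alpha(t)}\psi\Big)=\bigcup_{c=0}^{\alpha(t)}\{\,s(x_1,\ldots,x_n,\tilde c):s\in\mathcal{B}_t(\psi)\,\},$$
so I would substitute the new parameter symbol $\tilde c$ for $z$ in each $s_i$ and record $c$ as one more auxiliary parameter bounded by $\alpha$. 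This produces
$$\{\,s_i(x_1,\ldots,x_n,\tilde c,\bar{w};\tilde c_1,\ldots,\tilde c_m):0\le i\le k,\ 0\le c\le\alpha(t),\ 0\le c_j\le\alpha_j(t)\,\},$$
which again has the required shape, now with $m+1$ parameters (bounds $\alpha,\alpha_1,\ldots,\alpha_m$) and the same number $k+1$ of terms, and with $\bar{w}$ as the new inherited tuple.

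I do not expect any genuine obstacle here — the argument is entirely syntactic bookkeeping, which is why the lemma is stated as immediate. The only points that repay a moment's attention are the degenerate cases: the empty set (handled by allowing the family to be empty at the base level and observing that both the union and the quantifier clause preserve emptiness appropriately), and a parameter range $0\le c_j\le\alpha_j(t)$ that happens to be empty because $\alpha_j(t)<0$. The latter is harmless and in fact matches the intended semantics, since $\bigvee_{z=0}^{\alpha(t)}\psi$ is simply false when $\alpha(t)<0$, so an empty range for $c$ correctly contributes nothing to $\mathcal{B}_t$.
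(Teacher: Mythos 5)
Your overall plan---a structural induction following the clauses of Definition~\ref{B_set}, with the inherited bound variables carried as extra free variables---is exactly the routine argument the paper has in mind when it calls the lemma immediate, and your base cases and bounded-quantifier clause are fine. The genuine problem is the union step, and it sits precisely at the edge case you flagged and then dismissed. When you merge the two families into a \emph{single} block whose set-builder imposes all the constraints $0 \le c_j \le \alpha_j(t)$ simultaneously, the merged set is empty as soon as \emph{any one} range $[0,\alpha_j(t)]$ is empty, including ranges imported from the other family. So your observation that ``letting the foreign parameters range adds no new elements'' is true, but the merged family can \emph{lose} elements. Concretely, take $\psi_1 = \bigvee_{z=0}^{\alpha(t)} (z + x_1 < y)$ with $\alpha(t) = t-5$ and $\psi_2 = (x_2 < y)$, where $R = \Z[t]$, $X = \N$. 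For $t < 5$ we have $\mathcal{B}_t(\psi_1) = \emptyset$ and $\mathcal{B}_t(\psi_1 \vee \psi_2) = \{x_2\}$, while your merged family $\{\, s_i(\ldots;\tilde{c}) : i \in \{0,1\},\ 0 \le c \le t-5 \,\}$ is empty. Your closing remark that empty ranges are harmless is correct for the quantifier clause taken in isolation (there, emptiness of the range matches emptiness of $\mathcal{B}_t$); what breaks is the \emph{interaction} of an empty range with a union, which your proof never examines.

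The repair is to weaken the invariant: carry each term $s_i$ together with only the parameters accumulated along its own path, so that the family is a finite union of separately parametrized blocks $\bigcup_i \{\, s_i(x_1,\ldots,x_n,\bar{z}; \tilde{c}_j : j \in J_i) : 0 \le c_j \le \alpha_j(t) \text{ for } j \in J_i \,\}$, with unions handled by concatenating blocks and the quantifier clause adding the new parameter to every block. This is not merely cosmetic: under the literal product reading, the single-block form of Lemma~\ref{B_form} with \emph{fixed} $s_i$ and $\alpha_j$ cannot represent $\mathcal{B}_t$ for all $t$ in the example above (for $t<5$ all instances would have to define the function $x_2$, forcing the $\Z[t]$-coefficient of $x_1$ in each $s_i$ to vanish on $\{0,\ldots,4\}$, while for infinitely many $t \ge 5$ some fixed $s_i$ must define $x_1 + c$, forcing that same polynomial coefficient to be identically $1$). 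The per-term reading is also what the paper's later abbreviation $\bigvee_{s \in \mathcal{B}} \theta(s)$ and the formula \eqref{eq:elimination} of Lemma~\ref{exists_elimination} actually require: quantifying every block's parameters in front of every $s_i$ would make \eqref{eq:elimination} false for $t < 5$ in the same example, even though $\exists y\, \varphi$ holds. Once this bookkeeping is adopted, your induction goes through verbatim, so the gap is real but cleanly fixable---and it is a wrinkle the paper itself glosses over by omitting the proof.
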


By Lemma~\ref{B_form}, we are justified in writing formulas such as $$\bigvee_{s \in \mathcal{B}} \theta(s),$$ which stands for the $\mathcal{L}^+_R$-formula $$\bigvee_{i=0}^k \bigvee_{z_0 = 0}^{\alpha_0(t)} \ldots \bigvee_{z_m=0}^{\alpha_m(t)} \theta(s_i(x_1, \ldots, x_n, y, z_0, \ldots, z_m)),$$ and if $\theta$ has $R$-bounded quantifiers then so will the displayed formula above.

Let $S \subseteq R$ be the set of all values of $\alpha$ or $\beta$ which occur in subformulas of $\varphi$ of the form $D_\alpha(y +c)$ (``type 3'') or $\neg D_\beta(y+d)$ (``type 4''). If $S_-$ and $S_+$ are disjoint subsets of $S$, we define the formula $$\psi_{S_-, S+} := \bigwedge_{\xi \in S_-} \xi(t) < 0 \wedge \bigwedge_{\xi \in S_+} \xi(t) > 0 \wedge \bigwedge_{\xi \in S \setminus (S_- \cup S_+)} \xi(t) = 0,$$ and let $$\delta_{S_-, S_+}(t) = \pm \prod_{\xi \in S_- \cup S_+} \xi(t) \in R$$ with the sign chosen so that $\psi_{S_-, S_+} \models \delta_{S_-, S_+}(t) > 0$. By convention, let $\delta_{\emptyset, \emptyset} = 1$.

\begin{lem}
\label{boundary}
If $t \in X$, $k_1, \ldots, k_n, e \in \Z$, and $$\models (\psi_{S_-, S_+})_t \wedge \varphi_t(k_1, \ldots, k_n, e) \wedge \neg \varphi_t(k_1, \ldots, k_n, e - \delta_{S_-, S_+}(t)),$$ then $$e \in \left\{s(k_1, \ldots, k_n) + \ell : 1 \leq \ell \leq \delta_{S_-, S_+}(t) \textup{ and } s \in \mathcal{B}_t \right\}.$$
\end{lem}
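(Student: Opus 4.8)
The plan is to exploit the fact that, once $\varphi$ is normalized in $y$, the truth value of $\varphi_t(k_1,\dots,k_n,y)$ is a \emph{monotone} Boolean function of the truth values of its basic subformulas, together with the observation that a step of size $\delta := \delta_{S_-,S_+}(t)$ is invisible to the divisibility conditions. Write $\delta := \delta_{S_-,S_+}(t)$, and assume throughout that $\models(\psi_{S_-,S_+})_t$, so that $\delta>0$. First I would record the periodicity of the divisibility conditions: for $\xi\in S_-\cup S_+$ we have $\xi(t)\neq 0$ and $|\xi(t)|$ divides $\delta=\pm\prod_{\eta\in S_-\cup S_+}\eta(t)$, while for $\xi\in S\setminus(S_-\cup S_+)$ we have $\xi(t)=0$, so that $D_\xi$ is identically false. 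Since $\varphi$ is normalized, every subformula of type (3) has the shape $D_\gamma(y+c)$ and every subformula of type (4) has the shape $\neg D_\epsilon(y+d)$, with coefficient $1$ on $y$; because $(e+c)\equiv(e-\delta+c)\pmod{|\gamma(t)|}$ (and likewise for $\epsilon$), each such subformula has the \emph{same} truth value at $y=e$ and at $y=e-\delta$. Subformulas of type (5) do not involve $y$ at all, so they too are unchanged.

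Next I would set up the monotonicity. Fixing $t$ and $k_1,\dots,k_n$ and expanding every $R$-bounded quantifier of $\varphi$ (all of which are in variables other than $y$) into a finite conjunction or disjunction, I regard $\varphi_t(k_1,\dots,k_n,y)$ as a positive Boolean combination $F$ of the truth values of its basic subformulas. The type (2) inequalities $b<y$ that arise in this expansion are exactly the inequalities $b<y$ with $b$ the evaluation at $(k_1,\dots,k_n)$ of some $s\in\mathcal{B}_t$, since the substitutions of constants for the bounded-quantifier variables are precisely what produce the terms $s(x_1,\dots,x_n,\widetilde c)$ in Definition~\ref{B_set}. As $y$ decreases, a type (1) subformula $y<a$ can only pass from false to true, whereas a type (2) subformula $b<y$ can only pass from true to false.

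With this in place the crossing argument is short. Let $v$ and $w$ be the assignments of truth values to the basic subformulas at $y=e$ and at $y=e-\delta$ respectively; by hypothesis $F(v)$ is true and $F(w)$ is false. By the first paragraph $v$ and $w$ agree on all subformulas of types (3), (4), (5); by the monotonicity directions, $w\ge v$ on type (1) subformulas and $v\ge w$ on type (2) subformulas. Let $u$ be the assignment agreeing with $v$ everywhere except on the type (2) subformulas, where it takes the values of $w$. Then $u\le w$ pointwise, so $F(u)\le F(w)$ is false; and $u\le v$ pointwise with $F(v)$ true, so $u\neq v$. Hence some type (2) subformula $b<y$ is true under $v$ and false under $w$, i.e. $b<e$ but $\neg(b<e-\delta)$, which forces $b<e\le b+\delta$, so $e=b+\ell$ with $1\le\ell\le\delta$. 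As $b=s(k_1,\dots,k_n)$ for some $s\in\mathcal{B}_t$, this is exactly the asserted membership.

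I expect the main obstacle to be the second step rather than the (routine) periodicity computation: one must be careful that normalization really does make $\varphi_t(k_1,\dots,k_n,y)$ a \emph{monotone} function of its basic subformulas — this is where the restriction to positive Boolean operators and to $R$-bounded quantifiers in variables other than $y$ is essential — and that the type (2) inequalities surfacing from the expanded bounded quantifiers are cataloged precisely by $\mathcal{B}_t$, so that the witness $b$ produced by the crossing argument is genuinely of the form $s(k_1,\dots,k_n)$ with $s\in\mathcal{B}_t$. Once monotonicity and this bookkeeping are pinned down, the sandwich via the auxiliary assignment $u$ delivers the conclusion immediately.
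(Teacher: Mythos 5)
Your proof is correct and follows essentially the same route as the paper's: periodicity of the divisibility conditions under a shift by $\delta_{S_-,S_+}(t)$, the one-directional behavior of type (1) and type (2) inequalities as $y$ decreases, and the monotonicity of the positive Boolean combination forcing some $b<y$ to flip from true to false, which pins $e$ in $\{b+1,\dots,b+\delta_{S_-,S_+}(t)\}$ with $b$ cataloged by $\mathcal{B}_t$. Your explicit truth-assignment bookkeeping (the auxiliary assignment $u$) and the expansion of the $R$-bounded quantifiers into finite conjunctions/disjunctions just formalize steps the paper treats implicitly.
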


\begin{proof}
We fix $(k_1, \ldots, k_n) \in \Z^n$ and $t \in X$ such that $\models (\psi_{S_-, S_+})_{t}$ and we consider how replacing $e$ with $e - \delta_{S_-, S_+}(t)$ for the value of $y$ affects the truth value of each of the four types of basic formulas from Definition~\ref{prepared}. For a formula of the type $D_\alpha (y + c)$ (type 3), either $\alpha \notin S_- \cup S_+$ and $\alpha(t) = 0$, in which case this formula is always false, or else $\alpha \in S_- \cup S_+$ and $$\models D_{\alpha(t)}(e + c(k_1, \ldots, k_n)) \Leftrightarrow \models D_{\alpha(t)}(e - \delta_{S_-, S_+}(t) + c(k_1, \ldots, k_n))$$ since $\alpha(t)$ divides $\delta_{S_-, S_+}(t)$. By the same argument, the truth value of subformulas of the form $\neg D_\beta(y+d)$ is unchanged by replacing $e$ by $e - \delta_{S_-, S_+}(t)$. Since $\delta_{S_-, S_+}(0) > 0$, any basic subformula of the form $y < a$ (type 1) can only change from being false to being true as $y$ changes from $e$ to $e - \delta_{S_-, S_+}(t)$, but as $\varphi$ is a positive Boolean combination of basic subformulas, the truth value of $\varphi_t(k_1, \ldots, k_n, e)$ itself can only change from false to true if we replace $e$ by $e - \delta_{S_-, S_+}(t)$. So for the hypothesis of the Lemma to hold, some basic subformula of the form $b < y$ (type 2) must change from true to false as we change the value of $y$ from $e$ to $e - \delta_{S_-, S_+}(t)$, or in other words, $e \in \{b+1, b+2, \ldots, b + \delta_{S_-, S_+}(t)\}$.  Since the terms in  $\mathcal{B}_t$ represent all possible values of such a term $b$, the Lemma follows.
\end{proof}

Finally, we can describe the $\mathcal{L}^+_R$-formula with $R$-bounded quantifiers that is logically equivalent to $\exists y \left[\varphi(\overline{x}, y) \right]$.

\begin{lem}
\label{exists_elimination}
The formula $\exists y \left[\varphi(x_1, \ldots, x_n, y) \right]$ is logically equivalent to

\begin{equation*} \label{eq:elimination} 
\tag{$\star$}
\bigvee_{S_-, S_+ \subseteq S} \left[ \psi_{S_-, S_+} \wedge \left(\bigvee_{z=1}^{\delta_{S_-, S_+}(t)} \varphi_{-\infty}(x_1, \ldots, x_n, z)  \vee \bigvee_{z=1}^{\delta_{S_-, S_+}(t)} \bigvee_{s \in \mathcal{B}} \varphi(x_1, \ldots, x_n, s + z) \right) \right].
\end{equation*} 

\end{lem}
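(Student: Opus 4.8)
The plan is to fix a parameter $t \in X$ and a tuple $\overline{k} = (k_1, \ldots, k_n) \in \Z^n$, and to prove directly that $\models \exists y\, \varphi_t(\overline{k}, y)$ holds if and only if the instance $(\star)$ holds at $(t,\overline{k})$; since logical equivalence in the sense of Definition~\ref{equiv} is exactly agreement of truth sets for all $t$ and all $\overline{k}$, this suffices. First I would note that the three sets $\{\xi \in S : \xi(t) < 0\}$, $\{\xi \in S : \xi(t) > 0\}$ and $\{\xi \in S : \xi(t) = 0\}$ partition $S$, so for each $t$ there is exactly one pair $(S_-, S_+)$ of disjoint subsets with $\models (\psi_{S_-, S_+})_t$. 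Writing $\delta := \delta_{S_-, S_+}(t) > 0$ for this pair, the outer disjunction in $(\star)$ collapses at $(t, \overline{k})$ to
\[
\bigvee_{z=1}^{\delta} (\varphi_{-\infty})_t(\overline{k}, z) \;\vee\; \bigvee_{z=1}^{\delta} \bigvee_{s \in \mathcal{B}_t} \varphi_t(\overline{k}, s(\overline{k}) + z),
\]
and the whole argument reduces to analyzing this disjunction.

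The key preliminary step is to show that, under $\psi_{S_-, S_+}$, the truth value of $(\varphi_{-\infty})_t(\overline{k}, y)$ is periodic in $y$ with period $\delta$. By construction $\varphi_{-\infty}$ contains no basic formulas of type 1 or 2, so its only dependence on $y$ is through the divisibility atoms $D_\alpha(y + c)$ and $\neg D_\beta(y + d)$ of types 3 and 4. For each such $\alpha$ (respectively $\beta$), either $\alpha \in S_- \cup S_+$, so that $\alpha(t) \neq 0$ and $\alpha(t)$ divides $\delta$, or $\alpha(t) = 0$, in which case the atom is constantly false (respectively true) in $y$. As periodicity with period $\delta$ is preserved by positive Boolean combinations and by $R$-bounded quantifiers (which bind variables other than $y$), the claim follows.

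With periodicity in hand the $(\Leftarrow)$ direction is immediate: a witness $y = s(\overline{k}) + z$ making $\varphi_t$ true is already a witness for $\exists y\, \varphi_t$; and if $(\varphi_{-\infty})_t(\overline{k}, z)$ holds for some $1 \le z \le \delta$, then by periodicity it holds at $z - m\delta$ for every $m$, so choosing $m$ large enough that $z - m\delta < M(\overline{k}, t)$ and invoking Lemma~\ref{phi_inf_values} yields $\varphi_t(\overline{k}, z - m\delta)$, again a witness. For $(\Rightarrow)$, assume $\models \varphi_t(\overline{k}, e')$ for some $e'$. If the solution set $\{e' : \models \varphi_t(\overline{k}, e')\}$ is unbounded below, it contains some $e' < M(\overline{k}, t)$, whence Lemma~\ref{phi_inf_values} gives $(\varphi_{-\infty})_t(\overline{k}, e')$ and periodicity slides this into the range $\{1, \ldots, \delta\}$, satisfying the first disjunct. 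Otherwise the solution set has a least element $e$; then $\varphi_t(\overline{k}, e)$ holds while $\varphi_t(\overline{k}, e - \delta)$ fails, so Lemma~\ref{boundary} applies and gives $e = s(\overline{k}) + z$ with $s \in \mathcal{B}_t$ and $1 \le z \le \delta$, satisfying the second disjunct.

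The hard part is conceptual rather than computational: one must correctly isolate the dichotomy between a least witness and witnesses tending to $-\infty$, and verify that $\varphi_{-\infty}$ together with its $\delta$-periodicity faithfully captures the latter case. Once the periodicity observation and the reduction to a single pair $(S_-, S_+)$ are in place, the remaining verifications are the routine applications of Lemmas~\ref{phi_inf_values} and \ref{boundary} indicated above.
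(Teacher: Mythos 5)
Your proof is correct and takes essentially the same approach as the paper's: fix $t$ and $\overline{k}$, collapse $(\star)$ to the unique pair $(S_-, S_+)$ with $\models (\psi_{S_-,S_+})_t$, establish $\delta$-periodicity of $\varphi_{-\infty}$ from the divisibility atoms, and split into the two cases handled by Lemma~\ref{phi_inf_values} and Lemma~\ref{boundary} respectively. The only cosmetic difference is that you locate the least element of the solution set where the paper finds the first failure along the progression $e - r\delta_{S_-,S_+}(t)$; both choices feed into Lemma~\ref{boundary} identically.
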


Of course it may be the case that $\varphi$ contains no basic subformulas of type 2, in which case $\mathcal{B} = \emptyset$ and the second big disjunction within the brackets will be empty, and this ``empty disjunction'' should be interpreted as $0 \neq 0$.

\begin{proof}
Throughout the proof, we will assume that the value of the parameter $t$ is fixed and that the variables $(x_1, \ldots, x_n)$ also have fixed values $(k_1, \ldots, k_n) \in \Z^n$. In the proof below, we will sometimes write simply ``$\theta$'' instead of $\theta_t$ for ease of reading.

Let $(S_-, S_+)$ be the unique pair of subsets of $S$ such that $\models (\psi_{S_-, S_+})_t$.

First, suppose that the formula \eqref{eq:elimination} of the Lemma is true, and we must show that $\exists y \left[\varphi(k_1, \ldots, k_n, y) \right]$ also holds. One possibility is that $\varphi_{-\infty}(k_1, \ldots, k_n, \ell)$ is true for some $\ell \in \{1, \ldots, \delta_{S_-, S_+}(t)\}$. In this case, since the basic subformulas that occur in $\varphi_{-\infty}$ all involve predicates $D_{\alpha(t)}(x)$ whose truth values are invariant under adding multiples of $\alpha(t)$ to $x$ (since either $\alpha(t)$ divides $\delta_{S_-, S_+}(t)$ or else $\alpha(t) = 0$ and $D_{\alpha(t)}$ is always false), we have that for any $r \in \Z$, $$\models \varphi_{-\infty}(k_1, \ldots, k_n, \ell + r \delta_{S_-, S_+}(t)).$$ By Lemma~\ref{phi_inf_values}, if $r \ll 0$, then $\models \varphi(k_1, \ldots, k_n, \ell + r \delta_{S_-, S_+}(t))$ and hence $\models \exists y \left[\varphi(k_1, \ldots, k_n, y) \right]$. The only other way that \eqref{eq:elimination} could be true is if the second large disjunction within the brackets is true, but this immediately implies that $ \exists y \,\varphi(k_1, \ldots, k_n, y)$ holds.

Conversely, assume that $\models \varphi(k_1, \ldots, k_n, e)$ for some $e \in \Z$, and we must show that the formula \eqref{eq:elimination} is true.

\textbf{Case 1: $\models \varphi(k_1, \ldots, k_n, e - r \delta_{S_-, S_+}(t))$ holds for every $r \in \N$.} Then by Lemma~\ref{phi_inf_values}, for some sufficiently large $r \in \N$,  $\models \varphi_{-\infty}(k_1, \ldots, k_n, e - r \delta_{S_-, S_+}(t))$ holds. But the truth value of $\varphi_{-\infty}$ is unchanged by adding multiples of $\delta_{S_-, S_+}(t)$ to the last coordinate, since it only involves basic subformulas with $D_\alpha$ or $\neg D_\beta$, and therefore $\models \varphi_{-\infty}(k_1, \ldots, k_n, e - r' \delta_{S_-, S_+}(t))$ holds for \emph{every} $r' \in \Z$. We can pick $r'$ such that $e - r' \delta_{S_-, S_+}(t) \in \{1, \ldots, \delta_{S_-, S_+}(t)\}$, giving a witness for $z$ in $$\bigvee_{z=1}^{\delta_{S_-, S_+}(t)} \varphi_{-\infty}(k_1, \ldots, k_n, z),$$ so the formula \eqref{eq:elimination} holds.

\textbf{Case 2: For some $r \in \N$, $$\models \varphi(k_1, \ldots, k_n, e - r \delta_{S_-, S_+}(t)) \wedge \neg \varphi(k_1, \ldots, k_n, e - (r+1) \delta_{S_-, S_+}(t)).$$} Then by Lemma~\ref{boundary}, $$e - r \delta_{S_-, S_+}(t) \in \{s(k_1, \ldots, k_n) + \ell : 1 \leq \ell \leq \delta_{S_-, S_+}(t) \textup{ and } s \in \mathcal{B}_t \}.$$ So in this case the second big disjunction within the brackets of \eqref{eq:elimination} is true, and we are done.

\end{proof}

\textit{Proof of Theorem~\ref{qe_main}:} By a routine inductive argument on the number of quantifiers, to prove Theorem~\ref{qe_main}, it suffices to show the following:

\begin{claim}
\label{qe_reduction}
For any $\mathcal{L}^+_R$-formula with $R$-bounded quantifiers $\varphi(x_1, \ldots, x_n, y)$, the formula $\exists y \left[\varphi(x_1, \ldots, x_n, y) \right]$ is logically equivalent to some $\mathcal{L}^+_R$-formula with $R$-bounded quantifiers.
\end{claim}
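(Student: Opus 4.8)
The plan is to reduce Claim~\ref{qe_reduction} directly to the lemmas already established, so that the claim becomes essentially bookkeeping. The statement to be proved is that for any $\mathcal{L}^+_R$-formula with $R$-bounded quantifiers $\varphi(x_1, \ldots, x_n, y)$, the formula $\exists y \, \varphi(x_1, \ldots, x_n, y)$ is again logically equivalent to an $\mathcal{L}^+_R$-formula with $R$-bounded quantifiers. The point is that this single existential-elimination step is exactly what the preceding lemmas deliver, so the only real work is to verify that the output formula $(\star)$ of Lemma~\ref{exists_elimination} genuinely lies in the class of $R$-bounded formulas, and to chain the reductions together.

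First I would apply Lemma~\ref{prepared_lem} to replace $\varphi$ by a logically equivalent formula $\widetilde{\varphi}(x_1, \ldots, x_n, y')$ which is normalized in $y'$ and still has $R$-bounded quantifiers; this lets me assume from the outset (as the text does "from now on") that $\varphi$ is normalized in $y$, which is the standing hypothesis of Lemmas~\ref{phi_inf_values} through \ref{exists_elimination}. I would also invoke the standing assumption, stated at the top of the section, that $y$ does not occur inside any $R$-bounded quantifier of $\varphi$; this is harmless after renaming bound variables. With $\varphi$ normalized, Lemma~\ref{exists_elimination} asserts that $\exists y \, \varphi$ is logically equivalent to the explicit formula $(\star)$, and so it suffices to check that $(\star)$ belongs to the class of $\mathcal{L}^+_R$-formulas with $R$-bounded quantifiers.

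The verification that $(\star)$ is $R$-bounded is where I would spend the care. The formula $(\star)$ is a finite disjunction over the finitely many pairs $(S_-, S_+)$ of disjoint subsets of the finite set $S$, so a finite Boolean combination suffices there. Inside each disjunct, $\psi_{S_-, S_+}$ is a conjunction of atomic formulas of the form $\xi(t) < 0$, $\xi(t) > 0$, $\xi(t) = 0$, which are atomic $\mathcal{L}^+_R$-formulas and hence allowed. The two inner pieces $\bigvee_{z=1}^{\delta_{S_-, S_+}(t)} \varphi_{-\infty}(\overline{x}, z)$ and $\bigvee_{z=1}^{\delta_{S_-, S_+}(t)} \bigvee_{s \in \mathcal{B}} \varphi(\overline{x}, s + z)$ are $R$-bounded disjunctions: the outer bound $\delta_{S_-, S_+}(t)$ is an element of $R$ (a signed product of elements of $S$), so $\bigvee_{z=1}^{\delta_{S_-, S_+}(t)}$ is an $R$-bounded existential quantifier, and the disjunction $\bigvee_{s \in \mathcal{B}}$ was already shown, via Lemma~\ref{B_form} and the remark following it, to be an abbreviation for an $R$-bounded disjunction over bounds $\alpha_0(t), \ldots, \alpha_m(t) \in R$. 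Here I would note that $\varphi_{-\infty}$ is obtained from $\varphi$ merely by replacing certain basic subformulas by $0 = 0$ or $0 \neq 0$ (Definition~\ref{phi_inf}), so it is still an $\mathcal{L}^+_R$-formula with $R$-bounded quantifiers, and substituting the term $s + z$ for the free variable $y$ preserves this property since $s, z$ do not introduce new unbounded quantifiers. Assembling these observations, $(\star)$ is built from atomic formulas by positive Boolean operations and $R$-bounded quantifiers, hence is an $\mathcal{L}^+_R$-formula with $R$-bounded quantifiers, which proves the claim.

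The main obstacle I anticipate is purely the bookkeeping of confirming closure under $R$-bounded quantification at each node of the syntax tree of $(\star)$, in particular making sure that the abbreviated disjunction $\bigvee_{s \in \mathcal{B}}$ really unpacks to the nested $R$-bounded existential quantifiers of Lemma~\ref{B_form} and that no quantifier over $y$ (or over the newly introduced indices $z_0, \ldots, z_m$) escapes its intended bound. Once Claim~\ref{qe_reduction} is in hand, Theorem~\ref{qe_main} follows by the routine induction on the number of quantifiers already flagged in the text: one drives the innermost existential quantifiers inward using the claim, rewrites universal quantifiers as $\neg \exists \neg$, and observes that the class of $R$-bounded formulas is closed under negation, so the induction goes through.
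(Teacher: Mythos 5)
Your proposal is correct and follows exactly the paper's own route: normalize via Lemma~\ref{prepared_lem}, then invoke Lemma~\ref{exists_elimination}, whose output formula \eqref{eq:elimination} has $R$-bounded quantifiers. The only difference is that you spell out the syntactic verification that \eqref{eq:elimination} lies in the $R$-bounded class, which the paper leaves implicit (relying on the remark after Lemma~\ref{B_form}).
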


But this Claim follows immediately from Lemma~\ref{exists_elimination}. $\square$

\subsection{Proof of Theorem~\ref{qe_criterion}}

The proof of Theorem~\ref{qe_criterion} follows the same procedure as that of Theorem~\ref{qe_main} above, so we will briefly indicate the necessary changes to show Theorem~\ref{qe_criterion}.

Observe that to prove Theorem~\ref{qe_criterion}, it suffices to show the following:

\begin{claim}
\label{singe_e_elimination}
If $\varphi(\overline{x}, \overline{y})$ is any quantifier-free $\mathcal{L}^+_R$-formula, $\overline{y} = (y_1, \ldots, y_m)$ is a distinguished subtuple of the free variables, and $\varphi$ satisfies condition (1) of Theorem~\ref{qe_criterion} plus

\bigskip

\hangindent=0.3in $(2)^\prime$ $\varphi$ does not contain any term $f_\alpha(s)$ where $s$ is a term containing one of the variables $y_i$ and $\alpha \in R$ is not a constant,\\

\hangindent=0in then $\exists y_1 \varphi(\overline{x}, \overline{y})$ is equivalent to a quantifier-free $\mathcal{L}^+_R$-formula which also satisfies conditions (1) and $(2)^\prime$.

\end{claim}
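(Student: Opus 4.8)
The plan is to run exactly the three-step elimination procedure used for Theorem~\ref{qe_main}---normalize in $y_1$, collect the lower-bound set $\mathcal{B}$ together with $\varphi_{-\infty}$, and then output the formula \eqref{eq:elimination}---while checking at each stage that conditions (1) and $(2)'$ keep everything \emph{constant} and force all the ``bounded'' disjunctions that appear to collapse into genuine finite disjunctions, so that the output is honestly quantifier-free.

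The crucial observation is that $(2)'$ forces every coefficient of $y_1$ occurring in $\varphi$ (the Greek letters in front of $y_1$ in the type~1--4 basic formulas of Definition~\ref{prepared}) to be a constant, and (1) forces every divisibility modulus to be a constant as well. First I would run the normalization of Lemma~\ref{prepared_lem}. Since the coefficients of $y_1$ are fixed integers, for each such coefficient the condition ``$\xi(t)=0$'' holds either for all $t$ or for no $t$; hence there is a single relevant subset $S' \subseteq S$ (the indices of the nonzero coefficients), the formula $\psi_{S'}$ is a tautology, and $\nu_{S'}$ is a constant. The normalized formula is therefore a single disjunct, still quantifier-free, in which the new variable $y_1' = \nu_{S'}\cdot y_1$ occurs with coefficient $1$; the only divisibility predicate introduced is $D_{\nu_{S'}}$ with constant modulus, and every scalar multiplication introduced (the factors $\alpha', \beta', \delta', \ldots$) is constant. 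Thus conditions (1) and $(2)'$ are preserved and no $R$-bounded quantifier is created.

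Next I would apply Lemma~\ref{exists_elimination}. Because the set $S$ of divisibility moduli attached to $y_1'$ consists of constants, there is exactly one pair $(S_-, S_+)$ of subsets for which $\psi_{S_-, S_+}$ can hold, and $\delta_{S_-, S_+}(t)$ is a fixed positive integer $\delta$; the outer disjunction $\bigvee_{S_-, S_+ \subseteq S}$ in \eqref{eq:elimination} collapses to this single term, and the inner disjunctions $\bigvee_{z=1}^{\delta}$ become genuine finite disjunctions over constantly many values of $z$. Moreover, since $\varphi$ is quantifier-free, the recursion of Definition~\ref{B_set} never reaches an $R$-bounded quantifier, so $\mathcal{B}$ is a finite set of terms (Lemma~\ref{B_form} with all $\alpha_j$ absent) and $\bigvee_{s \in \mathcal{B}}$ is a finite disjunction. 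Hence \eqref{eq:elimination} is quantifier-free.

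Finally I would verify that the output satisfies (1) and $(2)'$. Every divisibility predicate appearing in \eqref{eq:elimination} comes either from $\varphi$ (hence has constant modulus by (1)), from $\varphi_{-\infty}$ (which introduces none), or is the constant-modulus $D_{\nu_{S'}}$, so (1) holds. For $(2)'$, the only new terms arise by substituting $s+z$ for $y_1'$, where $z$ is one of the finitely many constant values and each $s \in \mathcal{B}$ already satisfies $(2)'$ in the variables $y_2, \ldots, y_m$; since in the normalized formula $y_1'$ occurs only with coefficient $1$ in the basic formulas, this substitution applies no scalar multiplication at all to the substituted term, and the only scalars multiplying surviving terms are the constant factors produced during normalization. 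Therefore no term $f_\alpha(s)$ with $\alpha$ non-constant and $s$ involving some $y_i$ is created. I expect the main obstacle to be exactly this last piece of bookkeeping: confirming that the normalization (which rescales by $\nu_{S'}$) and the boundary substitution never push a non-constant scalar onto a term containing a remaining distinguished variable. The point that saves us is that $\nu_{S'}$ and all the rescaling factors are constants precisely because $(2)'$ held for $y_1$.
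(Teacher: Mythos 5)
Your proposal is correct and follows essentially the same route as the paper: normalize in $y_1$ via Lemma~\ref{prepared_lem}, then apply Lemma~\ref{exists_elimination}, observing that conditions (1) and $(2)^\prime$ force all coefficients, moduli, and bounds to be constants, so that every bounded disjunction in \eqref{eq:elimination} collapses to a genuine finite one and $\mathcal{B}$ is a finite set of terms. Your additional bookkeeping (the collapse of the disjunctions over $S'$ and $(S_-, S_+)$ to single tautological disjuncts, and the explicit check that the output still satisfies (1) and $(2)^\prime$) merely spells out details that the paper's proof leaves implicit.
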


\begin{proof}

The first step is to apply the procedure of Lemma~\ref{prepared} to show that $\exists y_1 \varphi(\overline{x}, \overline{y})$ is logically equivalent to a formula $\exists y' \widetilde{\varphi}(\overline{x}, y', y_2, \ldots, y_m)$ which is normalized in $y'$ and such that $\widetilde{\varphi}$ still satisfies (1) and $(2)^\prime$. The crucial point here is that the hypotheses  (1) and $(2)^\prime$ on $\varphi$ imply that all the elements $\alpha, \ldots, \theta$ of $R$ in subformulas of type (1) through (6) must all be constants, so the new divisibility conditions $D_{\nu'}$ which occur in $\widetilde{\varphi}$ also involve only constant elements $\nu'$ of $R$.

Then, Lemma~\ref{exists_elimination} can be applied to the normalized formula $\widetilde{\varphi}$, with the additional condition that each quantity $\delta_{S_-, S_+}$ is constant since it is a product of constants $\alpha$ which occur in atomic subformulas $D_\alpha( \cdot)$. Furthermore, the disjunction over $\mathcal{B}$ in the formula \eqref{eq:elimination} is also of bounded size since we are assuming that $\varphi$ is quantifier-free (the size of $\mathcal{B}$ grows with $t$ only if there are bounded quantifiers in $\varphi$). So all the disjunctions in the formula \eqref{eq:elimination} are of constant size, and this gives a quantifier-free formula equivalent to $\exists y_1 \varphi(\overline{x}, \overline{y})$.

\end{proof}

\bibliography{parPres}

\end{document}